\newtheorem{theorem}{Theorem}
\newtheorem{lemma}{Lemma}
\newtheorem{proposition}{Proposition}
\newtheorem{conjecture}{Conjecture}
\theoremstyle{definition}
\newcommand{\RN}[1]{
  \textup{\uppercase\expandafter{\romannumeral#1}}%
}
\newcommand{\barc}{\operatorname{B}}
\newcommand{\dg}{\operatorname{dg}}
\newcommand{\rank}{\operatorname{rank}}
\newcommand{\End}{\operatorname{End}}
\newcommand{\coEnd}{\operatorname{coEnd}}
\newcommand{\PBW}{\operatorname{PBW}}
\newcommand{\As}{\operatorname{As}}
\newcommand{\Id}{\operatorname{Id}}
\newcommand{\Hom}{\operatorname{Hom}}
\newcommand{\Homology}{\operatorname{H}}
\newcommand{\degree}{\operatorname{deg}}
\begin{document}

\title{Minimal Models of Some Differential Graded Modules}

\author{  BERR\.{I}N \c SENT\" URK$^*$,  \" OZG\" UN \" UNL\" U }
\address{ Department of Mathematics,
              TED University, Ankara, 06420, Turkey }
\email{ berrin@fen.bilkent.edu.tr }
\address{ Department of Mathematics, Bilkent
University, Ankara, 06800, Turkey.}

\email{ unluo@fen.bilkent.edu.tr }

\thanks{The authors are partially supported by T\"UB\.ITAK-TBAG$/117$F$085$}
\thanks{$^*$Corresponding author}
\subjclass[2010]{Primary 16E45; Secondary 18D50}

\keywords{rank conjecture, operads, minimal Hirsch-Brown model}
\maketitle
\begin{abstract}
Minimal models of chain complexes associated with free torus actions on spaces have been extensively studied in the literature. In this paper, we discuss these constructions using the language of operads.  The main goal of this paper is to define a new Koszul operad that has projections onto several of the operads used in these minimal model constructions.
\end{abstract}

\section{Introduction}

\label{intro}
Let $k$ be an algebraically closed field of characteristic $2$ and $G$ an elementary abelian $2$-group of rank $r$. Considering the chain complexes associated with free $G$-spaces, one obtains an algebraic conjecture stronger than the Halperin-Carlsson rank conjecture about $2$-torus actions ($G$-actions). For any chain complex $C$ of $k$-modules, we denote the homology of $C$ by $\Homology(C)$.
\begin{conjecture}\label{conjchaincomplex}
	If $C$ is a finite chain complex of free $kG$-modules with $\Homology(C)\neq 0$, then $\dim_k	\Homology(C)$ is at least $2^r$.
\end{conjecture}
Considering the polynomial ring $S:=k[x_1,\ldots,x_r]$, an equivalent algebraic conjecture is given by Proposition $\RN{2}.1$ and $\RN{2}.2$ in \cite{Carlssonbeta}.  We say $(M,\partial)$ is a \emph{differential graded $S$-module} ($\dg$-$S$-module)
if $M$ is an $S$-module, and $\partial$ is an $S$-linear endomorphism of
$M$ that has degree $-1$ and satisfies $\partial^2  = 0$.
Moreover, we say a $\dg$-$S$-module is \emph{free} if its underlying graded $S$-module is free.
\begin{conjecture}\cite[Conjecture~\RN{2}.8]{Carlsson1986}\label{carlssonsconjecture}
Let $S=k[x_1,\ldots,x_r]$ be the polynomial
algebra in $r$ variables of degree $-1$ with coefficients in an algebraically closed field of characteristic $2$. If
$(M,\partial)$ is a free, finitely generated $\dg$-$S$-module with $0<\dim_k	\Homology(M)< \infty $, then ${\rank_S M \geq 2^r}$.
\end{conjecture}

 In the literature, bounds for the dimension of $\Homology(C)$ in Conjecture \ref{conjchaincomplex} and for the rank of $M$ in Conjecture \ref{carlssonsconjecture} are obtained by studying minimal models of $C$ and $M$. In \cite{Carlsson2}, Carlsson showed the existence of minimal models of certain free differential graded $S$-modules. Here we give an explicit construction of these minimal models using operad theory. Again using operads we construct minimal models of chain complexes of Borel constructions of spaces with a free $G$-action. These minimal models are equivalent to  minimal Hirsch-Brown models given by Allday-Puppe \cite{AlldayPuppe}.

 Note that Conjecture \ref{conjchaincomplex} holds if we further assume that the Euler characteristic of $C$ is non-zero. More precisely,
$$\chi(C)=\left|G\right|  \chi(k\otimes_{kG} C)=\chi(\Homology(C))=\sum\limits_{i\geq 0}(-1)^i \dim_k{\Homology_i(C)}\neq 0.$$
Hence the dimension of total homology $\dim_k{\Homology(C)}\geq \left|G\right| = 2^r$.
Due to the equivalence of conjectures one could ask if a similar result holds for Conjecture \ref{carlssonsconjecture}. We prove the conjecture in the following case:
\begin{theorem}\label{thm1}
Conjecture \ref{carlssonsconjecture} holds if every integer $n$, $m$ have the same parity whenever
 ${\Homology_n(M)\neq 0}$ and ${\Homology_m(M)\neq 0}$. In fact, $\chi(\Homology(M)):=\sum\limits_{i\geq 0}(-1)^i \dim_k{\Homology_i(M)}\neq 0$ implies Conjecture \ref{carlssonsconjecture}.
\end{theorem}

When the characteristic of the field is odd, a result analogous to Theorem \ref{thm1} is proved by Walker \cite{Walker}, \cite{WalkerARC}.

Puppe \cite{Puppe} asserted that, given a certain multiplicative structure on the minimal Hirsch-Brown model for the equivariant cohomology
of a space with a free torus action, these bounds can be tightened to verify the Halperin-Carlsson rank conjecture. The main goal of this paper is to  put a multiplicative structure on minimal Hirsch-Brown models of $G$-spaces. Note that the group algebra $kG$ is an exterior algebra since $k$ is a characteristic $2$ field. First we consider the group algebra $kG$ and the polynomial algebra $S=k[x_1,\ldots,x_r]$ as algebraic operads where all non-trivial operations are unary operations. Then to put multiplicative structures on our minimal models we define a new Koszul operad.

\begin{theorem}\label{mainthm}Let $k$ be an algebraically closed field of characteristic $2$ and $G$ an elementary abelian $2$-group of rank $r$. Then there exists an algebraic operad $\mathscr{P}$ in the category of differential graded modules over $k$ such that $\mathscr{P}$ has the following properties:
\begin{itemize}
\item[(i)] The unary operations of $\mathscr{P}$ with the composition of $\mathscr{P}$ considered
as multiplication is isomorphic to the group algebra $kG$;
\item[(ii)] $\mathscr{P}$ has an associative binary operation $\mu$;
\item[(iii)] $\mathscr{P}$ is a Koszul operad;
\item[(iv)] The Koszul dual operad of $\mathscr{P}$ has projections onto the associative operad $\mathrm{As}$ and the polynomial algebra $S=k[x_1,\ldots,x_r]$ where we consider $S$ as an operad whose all nontrivial operations are unary;
\item[(v)] For every $G$-space $X$ the singular cochain complex $C^{\bullet}(X;k)$ has a $\mathscr{P}$-algebra structure whose restriction to the unary operations of $\mathscr{P}$ gives the natural $kG$-module structure on $C^{\bullet}(X;k)$ and the action of $\mu $ is the same as the dual of the Alexander-Withney diagonal map.
\end{itemize}
\end{theorem}

Let $\mathscr{P}$ be the operad in Theorem \ref{mainthm} and $\iota:\mathscr{P}\rightarrow \Omega\mathscr{P}$ be the universal twisting morphism. Given a space $X$ that admits a free $G$-action, we will consider the bar construction  $\barc_{\iota} \Homology(C^{\bullet}(X;k))$ as the minimal Hirsch-Brown model of $X$; see Section \ref{minmodelsection}.

Throughout this paper, $k$ is an algebraically closed field of characteristic $2$ and all (co)operads are non-symmetric (co)operads in the category of  $\dg$-modules over $k$.
In Section \ref{sec:outlineofanapplication}, we recall Puppe's method to find lower bounds on total homology dimension of complexes with a free $G$-action and give an outline of our method.
In Section \ref{sec:1}, we recall definitions, notation, and well-known results about algebraic (co)operads.
In Section \ref{sect:MinimalHirschBrown}, we discuss constructions of minimal models and prove Theorem \ref{thm1}.
In Section \ref{secMult}, we prove the our main result Theorem \ref{mainthm} and its applications.
\section{The outline of an application of Theorem \ref{mainthm}}
\label{sec:outlineofanapplication}
Assume that $r$ is a positive integer and $m$ is a nonnegative integer. Let $S$ denote the polynomial algebra $k[x_1,\ldots,x_r]$ with $\degree(x_i)=-1$ and  $\Lambda_m$ denote the exterior algebra $\Lambda(z_{1}^{(m)},\ldots, z_r^{(m)})$ with $\degree(z_i^{(m)})=-m$ for all $i$ in $\{1,\ldots, r\}$.
Note that according to our degree conventions Puppe \cite{Puppe} defines the Koszul complex $K_r(m)$ corresponding to the regular sequence $(x_1^{m+1},\ldots, x_r^{m+1})$ in $S$ as the differential graded algebra
$ K_r(m)=\left(S \tilde{\otimes} \Lambda_m, \partial\right), $
 where the differential $\partial$ determined by $\partial(x_i\otimes 1)=0$ and $\partial(1\otimes z_i^{(m)})=x_i^{m+1}$. Then Puppe considers $\dg$-$S$-module morphisms $\gamma: K_r(m)\rightarrow K_r(0)$ which lift the projection
 $$\epsilon:\Homology (K_r(m))\cong S/(x_1^{m+1},\ldots, x_r^{m+1})\longrightarrow  S/(x_1,\ldots, x_r)\cong \Homology (K_r(0)). $$
 Puppe denotes the rank of $\gamma$ by $\rank(\gamma)$ which is the rank of the localization of $F\otimes \gamma$ where $F$ is the field of fraction of $S$.
In \cite[Lemma~2.1.a]{Puppe}, Puppe shows that if $\gamma$ also preserves the multiplicative structure, then $\rank(\gamma)=2^r$. In \cite[Lemma~2.1.b]{Puppe}, Puppe asserts that $\rank(\gamma)\geq 2r$ without the assumption about the multiplicative structure.

Let $G$ be an elementary abelian $2$-group of rank $r$ generated by $g_1, g_2, \ldots, g_r$. Puppe defined a minimal Hirsch-Brown model $M=S\tilde{\otimes}\Homology^{\bullet}(X;k)$ associated to a free action of $G$ on $X$, where $X$ is a finite dimensional $CW$-complex. Then in \cite[Proposition~4.1]{Puppe}, Puppe showed that there exists a map  $\alpha: K_r(m)\rightarrow M$ that induces a surjective map on the zeroth homology of these complexes for large enough $m$. Moreover, in  \cite[Proposition~4.2]{Puppe}, Puppe showed that there exists a map $\beta: M \rightarrow K_r(0)$ that induces a surjective map on the zeroth homology. Now notice that the rank of  $\gamma=\beta\circ \alpha$ is less than or equal to the total homology dimension  $\sum_{i=0}^{\infty}\dim_k \Homology_i (X;k)$ of $X$. Hence Puppe used this idea to put lower bounds on the total cohomology dimension of complexes with a free $G$-action.

 We will use an idea similar to Puppe's idea discussed above. In our setting Koszul complexes are dual of the ones considered by Puppe. Now we give the definitions in our setting. Assume $G$ acts on the product of $r$ equidimensional spheres $\mathbb{S}^m \times \ldots \times \mathbb{S}^m$ such that $g_i$ acts on the $i$th sphere with the antipodal action. Then we will denote the minimal Hirsch-Brown model $\barc_{\iota} \Homology(C^{\bullet}(X;k))$ associated to this action by $\tilde{K_r}(m)$ and call it a Koszul complex for our operad in Theorem \ref{mainthm}. We use this terminology because when the above bar construction is done using the suboperad of $\mathscr{P}$ generated by its unary operations, we obtain the usual dual Koszul complex $S^*\tilde{\otimes} \Lambda_m$ which we still denote by $K_r(m)$ by abuse of notation. Hence from now on,
 $$ K_r(m)=\left(S^* \tilde{\otimes} \Lambda_m, \partial\right), $$
 where the differential $\partial$ determined by $\partial(x_i\otimes 1)=0$ and
 $$\partial(x_i^n\otimes z_i^{(m)})=\left\{
 \begin{array}{cc}
   0 & \text{ if } n< m+1 \\
   x_i^{n-m-1}& \text{ if } n\geq m+1.
 \end{array}
 \right.$$

For the rest of the paper $i_m$ will denote the natural inclusion of $\mathbb{S}^0 \times \ldots \times \mathbb{S}^0$ in $\mathbb{S}^m \times \ldots \times \mathbb{S}^m$ where each $\mathbb{S}^0$ is sent to the south and north poles of the corresponding $\mathbb{S}^m$. This continuous function induces coalgebra morphisms $i^*_m:\tilde{K_r}(m)\rightarrow \tilde{K_r}(0)$ and  $i^*_m:K_r(m)\rightarrow K_r(0)$. Hence, in \cite[Lemma~2.1.a]{Puppe} and \cite[Lemma~2.1.b]{Puppe}, Puppe gave lower bounds on the rank of certain maps between Koszul complexes $K_r(m)$ and $K_r(0)$ which induces the same map as $i_m$ does between the homology of these complexes.  Puppe in  \cite[Corollary~5.2]{Puppe} asserted that if the minimal Hirsch-Brown model of a finite dimensional space $X$ with a free $G$-action carries differential graded algebra structure, then  $\sum_{i=0}^{\infty}\dim_k \Homology^i (X;k)\geq 2^r$. Note that the multiplicative structure is not considered in  \cite[Proposition~4.1]{Puppe} and  \cite[Proposition~4.2]{Puppe}. However, in the proof of \cite[Corollary~5.2]{Puppe} one needs extensions of \cite[Proposition~4.1 and ~4.2]{Puppe} to differential graded algebras. More precisely, the morphism $\alpha$ must be constructed in a way compatible with the multiplicative structure. However, the almost random selections of images of $\alpha $ in the homological proofs given in the literature for these results  do not take the multiplicative structure into consideration. For example, some selections of $\alpha $ fail for the associated minimal model of the free $\mathbb{Z}/2\mathbb{Z}\times \mathbb{Z}/2\mathbb{Z} $ action on $\mathbb{R}P^3$ induced by quotienting out the center of the free action of $Q_8$ on $Sp(1)\cong \mathbb{S}^3$ by the left multiplication. Here we partially fill the gap by the following Proposition.

\begin{proposition}\label{mainprop}
Let $G$ be an elementary abelian $2$-group and act freely on a finite-dimensional simplicial set $X$. Assume $\mathscr{P}$ is the operad in Theorem \ref{mainthm}. Let $\tilde{M}$ denote the minimal Hirsch-Brown model of $X$; in other words $\tilde{M}=\barc_{\iota}(\Homology(C^{\bullet}(X;k)))$. Then there exists a positive integer $m$ and $\mathscr{P}^{\mbox{!`}}$-coalgebra morphisms $\alpha$, $\beta$ such that the composition
$$ \tilde{K_r}(m)\overset{\alpha}\rightarrow \tilde{M}\overset{\beta} \rightarrow \tilde{K_r}(0), $$
sends $K_r(m)$ to $K_r(0)$ and which induces the same map from $\Homology (K_r(m))$ to $ \Homology( K_r(0))$ as $i_m^*$ does.
\end{proposition}
However, using the above proposition we cannot confirm the main result of Puppe using \cite[Lemma~2.1.a]{Puppe} as our multiplicative structure does not have all the properties Puppe used in the proof of \cite[Lemma~2.1.a]{Puppe}. On the other hand \cite[Lemma~2.1.b]{Puppe} is proved by only using the differential graded $S$-module structure on these Koszul complexes. Hence to improve the results about bounds for the total dimension of the cohomology of a space that admits a free $G$-action, it is enough to consider maps between $\tilde{K_r}(m)$ and $\tilde{K_r}(0)$ and prove analogs of \cite[Lemma~2.1.b]{Puppe}.
The following proposition is similar to \cite[Lemma~2.1.b]{Puppe} but proved using our setting where we fix an isomorphism $\tilde{K_r}(m)\cong \mathscr{P}^{\mbox{!`}}\circ \Lambda_m$ with the composition $\circ$ defined as in \cite[Section~5.9.1]{Loday}.

\begin{proposition}\label{mainprop2}Let $\mathscr{P}$ denote the operad in Theorem \ref{mainthm}, $m$ be a positive integer and $\gamma:{K_r}(m)\rightarrow {K_r}(0)$  be a $S^*$-coalgebra morphism which induces the same map from $\Homology (K_r(m))$ to $\Homology( K_r(0))$ as $i_m^*$ does. Assume $\gamma$  extends to a $\mathscr{P}^{\mbox{!`}}$-coalgebra morphism $\tilde{\gamma}:\tilde{K_r}(m)\longrightarrow \tilde{K_r}(0)$ whose restriction to $\mathscr{P}^{\mbox{!`}}\circ 1$ is induced by the identity on $\mathscr{P}^{\mbox{!`}}$. Then the linear map $F\otimes _S \gamma^*$ has rank at least $2r$ where $F$ denotes the field of fractions of the ring $S$.
\end{proposition}

As discussed above Proposition \ref{mainprop2} can be used to give lower bounds for the total homology of the finite dimensional complexes with a free $G$-action.

\section{Definitions and Notation}
\label{sec:1}
We will take most of definitions and notation from \cite{Loday} and \cite{Hoffbeck}.
\subsection{Free (co)operads}\label{treesection}
A \emph{$\dg$-$\mathbb{N}$-module} $M$ is a sequence of differential graded $k$-modules
$$M=(M(0),M(1),M(2),\dots ).$$

A \emph{free operad} over the $\dg$-$\mathbb{N}$-module $M$ is an operad $\mathscr{T}(M)$ together with a
$\dg$-$\mathbb{N}$-module morphism $i:M \rightarrow \mathscr{T}(M)$ such that
if $\mathscr{P}$ is an operad and $f:M\rightarrow \mathscr{P}$ is an $\dg$-$\mathbb{N}$-module morphism
then there exists a unique operad morphism
$\tilde{f}:\mathscr{T}(M)\rightarrow \mathscr{P} $ with $f= \tilde{f}\circ i$.

There exists a free operad $\mathscr{T}(M)$ over every $\dg$-$\mathbb{N}$-module $M$, see \cite[Section~5.9.6]{Loday}. Let $n(v)$ denote the number of leaves of a vertex $v$ in a tree and $\tau(M)$ be the tensor product of $M(n(v))$'s as $v$ ranges over the vertices of a tree $\tau$. As a $\dg$-$\mathbb{N}$-module, $\mathscr{T}(M)$ is the direct sum of $\tau(M)$'s where $\tau$ ranges over all planar trees. The operad composition of $\mathscr{T}(M)$ is given by grafting trees.
Hence, as an operad $\mathscr{T}(M)$ is generated by $\mathcal{B}^{\mathscr{T}}(M)$, that is,

$$
\sbox0{$\begin{array}{c}
\hspace{7pt}
\begin{tikzpicture}[scale=0.7,baseline=-1mm]
\draw [thick](0,-0.1) -- (0,-0.5);
\draw  node at (0,0.25) [draw,circle, scale=0.6]{\Large{b}};
\end{tikzpicture}
\end{array}\Big\vert \, \textup{b}\in \mathcal{B}_0 $ }
\mathopen{\resizebox{1.2\width}{\ht0}{$\Big\{$}}
\usebox{0}
\mathclose{\resizebox{1.2\width}{\ht0}{$\Big\}$}}
\hspace{2pt}
\bigcup
\hspace{2pt}
\sbox0{$\begin{array}{c}
\hspace{7pt}
\begin{tikzpicture}[scale=0.7,baseline=-1mm]
\draw [thick](0,0.6) -- (0,0.9);
\draw [thick](0,-0.1) -- (0,-0.5);
\draw  node at (0,0.25) [draw,circle, scale=0.6]{\Large{b}};
\end{tikzpicture}
\end{array}\Big\vert \, \textup{b}\in \mathcal{B}_1 $ }
\mathopen{\resizebox{1.2\width}{\ht0}{$ \Big\{$ }}
\usebox{0}
\mathclose{\resizebox{1.2\width}{\ht0}{$\Big\}$ }}
\hspace{2pt}
\bigcup
\hspace{2pt}
\sbox0{$\begin{array}{c}
\hspace{7pt}
\begin{tikzpicture}[scale=0.7,baseline=-1mm]
\draw [thick](-0.25,0.5) -- (-0.55,0.9);
\draw [thick](0.25,0.5) -- (0.55,0.9);
\draw [thick](0,-0.1) -- (0,-0.5);
\draw  node at (0,0.25) [draw,circle, scale=0.6]{\Large{b}};
\end{tikzpicture}
\end{array}\Big\vert \, \textup{b}\in \mathcal{B}_2 $ }
\mathopen{\resizebox{1.2\width}{\ht0}{$\Big\{$}}
\usebox{0}
\mathclose{\resizebox{1.2\width}{\ht0}{$\Big\}$}}
\hspace{2pt}
\bigcup
\hspace{2pt}
\ldots
$$
where $\mathcal{B}_j$ is a basis for $M(j)$ as a $k$-vector space.
The $\dg$-$\mathbb{N}$-module $\mathscr{T}(M)$ is always equipped with an extra grading, called the weight-grading.
If $M$ itself has no such extra grading, then the trees in $\mathscr{T}(M)$ with exactly $n$-vertices are said to have weight-grading $n$.
If $M$ already has weight-grading, then the sum of weight-grades of elements in $M$ used to label the vertices of a tree in $\mathscr{T}(M)$
is the weight-grade of that tree.
Hence we have a decomposition of $\mathscr{T}(M)$ indexed by the weight-grading
$$\mathscr{T}(M)=\bigoplus _{n\geq 0}\mathscr{T}(M)^{(n)}  \, ,$$
where each $\mathscr{T}(M)^{(n)}$ is a $\dg$-$\mathbb{N}$-module.

Dually, we let $\mathscr{T}^c(M)$ denote the cofree cooperad over $M$. $\mathscr{T}^c(M)$ is isomorphic to $\mathscr{T}(M)$
as a weight-graded $k$-vector space, while as a cooperad $\mathscr{T}^c(M)$ is cogenerated by the generators of $\mathscr{T}(M)$ mentioned above.

Let  $sM$ denote the $\dg$-$\mathbb{N}$-module $M$ whose degree is shifted by $1$,
i.e., $sM_i(n)=M_{i-1}(n)$ for $n\in \mathbb{N}$ and $i\in \mathbb{Z}$. More generally, for any integer $m$, $s^mM$ denotes the $\dg$-$\mathbb{N}$-module $M$ whose degree is shifted by $m$.

\subsection{Quadratic (co)operads}\label{quadraticcoop}

A pair $(M,R)$ is called an \emph{operadic quadratic data pair} if $M$ is a $\dg$-$\mathbb{N}$-module and $R$ is a sub-$\dg$-$\mathbb{N}$-module of $\mathscr{T}(M)^{(2)}$. The \emph{quadratic operad} associated to the quadratic data pair $(M,R)$ is
$$\mathscr{P}(M,R):=\mathscr{T}(M)/( R ),$$
where $( R )$ is the operatic ideal generated by $R\subseteq \mathscr{T}(M)^{(2)}$. In other words,
$\mathscr{P}(M,R)$ is the largest quotient operad $\mathscr{P}$ of $\mathscr{T}(M)$ for which the composite
$$R \rightarrowtail \mathscr{T}(M)^{(2)}\rightarrowtail \mathscr{T}(M)\twoheadrightarrow \mathscr{P}$$
is zero.

Dually, the \emph{quadratic cooperad} $\mathscr{C}(M,R)$
 associated to the quadratic data pair $(M,R)$ is the largest subcooperad of $\mathscr{T}^c(M)$ for which the composite
$$\mathscr{C} \rightarrowtail \mathscr{T}^{c}(M)\twoheadrightarrow \mathscr{T}^c(M)^{(2)} \twoheadrightarrow \mathscr{T}^c(M)^{(2)}/R$$
is zero, see \cite[Section~7.1]{Loday} and \cite[Section~6.3.1]{Dotsenko}.

 The \emph{Koszul dual cooperad} of a quadratic operad $\mathscr{P}=\mathscr{P}(M,R)$ is
$$\mathscr{P}^{\mbox{!`} }:=\mathscr{C}(sM,s^2R),$$
where $s^2R$ is the image of $R$ under the natural map ${\mathscr{T}(M)^{(2)}\rightarrow\mathscr{T}(sM)^{(2)}}$.
Similarly, the \emph{Koszul dual operad} of a quadratic cooperad ${\mathscr{C}=\mathscr{C}(M,R)}$ is
$$\mathscr{C}^{\mbox{!`} }:=\mathscr{P}(s^{-1}M,s^{-2}R),$$
where $s^{-2}R$ is the image of $R$ under the map ${\mathscr{T}(M)^{(2)}\rightarrow\mathscr{T}(s^{-1}M)^{(2)}}$
 induced by the natural degree $1$ $\dg$-$\mathbb{N}$-module morphism $M$ to $sM$, see \cite[Section~7.4.7]{Loday}.

\subsection{The (co)bar construction }
For an operad $\mathscr{P}$, let $\overline{\mathscr{P}}$ be the cokernel of the unit map $I\rightarrow \mathscr{P}$.
 If $\mathscr{P}=I\oplus \overline{\mathscr{P}}$ as
$\dg$-$\mathbb{N}$-modules, the \emph{bar construction} $B\mathscr{P}$ of $\mathscr{P}$ is the $\dg$-cooperad $\mathscr{T}^c(\,s\,\overline{\mathscr{P}})$
with differential $d_1+d_2$,
where $d_1$ and $d_2$ are as in \cite[Section~6.5.1]{Loday}.

Similarly, for a cooperad $\mathscr{C}$, let $\overline{\mathscr{C}}$ denote the kernel of the
counit map $\mathscr{C}\rightarrow I$. If $\mathscr{C}=I\oplus \overline{\mathscr{C}}$
as $\dg$-$\mathbb{N}$-modules, the \emph{cobar construction} $\Omega \mathscr{C}$ of $\mathscr{C}$ is the $\dg$-operad
$\mathscr{T}(\,s^{-1}\,\overline{\mathscr{C}})$ with differential $d_1+d_2$,
where $d_1$ and $d_2$ are as in \cite[Section~$6.5.2$]{Loday}.

Let $(M,R)$ be an operatic quadratic data pair.
The quadratic operad $\mathscr{P}=\mathscr{P}(M,R)$ is \emph{Koszul} if
the natural $\dg$-cooperad morphism ${\mathscr{P}^{ \mbox{!`} } \rightarrow \barc \mathscr{P}}$
is a quasi-isomorphism of $\dg$-cooperads, see \cite[Theorem~7.4.2]{Loday}.
 When $\mathscr{P}$ is Koszul, we define the operad $\mathscr{P}_{\infty }:=\Omega \mathscr{P}^{\mbox{!`} }$.

\subsection{(Co)algebras over (co)operads}\label{coalgebras}
Let $\mathscr{P}$ be an operad. A \emph{$\mathscr{P}$-algebra} is a differential graded $k$-module $A$
 together with an operad morphism
$\mathscr{P}\rightarrow\mathop{\End}_A$, where $\mathop{\End}_A(n)=\Hom(A^{\otimes n},A)$.
Dually, for a cooperad $\mathscr{C}$,
a \emph{$\mathscr{C}$-coalgebra} is a differential graded $k$-module $C$ together with an operad morphism
${\mathscr{C}}^* \rightarrow \mathop{\coEnd}_C$, where ${\mathscr{C}}^*$ is the dual of ${\mathscr{C}}$ and
$\mathop{\coEnd}_C(n)=\Hom(C,C^{\otimes n})$. For differential graded $k$-module $C$, we define an operad morphism $$\psi_C:\mathrm{coEnd}_C\rightarrow \mathrm{End}_{C^*}$$ which sends $\alpha:C\rightarrow C^{\otimes n}$ to the composition $(C^*)^{\otimes n}\overset{i_C}{\rightarrow } (C^{\otimes n})^{*}\overset{\alpha^*}\rightarrow C^*$ where $i_C$ is defined by $$i_C(f_1\otimes \dots \otimes f_n)(v_1\otimes \dots \otimes v_n)=f_1(v_1)\dots f_n(v_n)$$ for every $f_1,\dots,f_n\in C^*$ and $v_1,\dots,v_n$ in $C$. Given a dg-$\mathscr{C}$-coalgebra $C$, we get dg-$\mathscr{C}^*$-algebra structure on $C^*$ by the composition  $\mathscr{C}\rightarrow \mathrm{coEnd}_C\overset{\psi_C}{\rightarrow }\mathrm{End}_{C^*}$.

A cooperad $\mathscr{C}$ is called \emph{coaugmented} if its counit map has a right inverse. Let $C$ be a coalgebra over coaugmented cooperad $\mathscr{C}$. For $x\in C$, we define $x_1,x_2,\ldots$ by
$$\Delta_C(x)=(x_1,x_2,\ldots)\in  \prod_{n\geq 1}\left(\mathscr{C}(n)\otimes C^{\otimes n}\right),$$ where $\Delta_C$ denotes the structure map of the coalgebra $C$.
We filter the coalgebra $C$ by $F_rC:=\{x\in C \, | \, x_i=0 \mbox{ for any } i>r \}$
for $r\geq 1$. If $C=\bigcup_{r\geq 1}F_rC$, then $C$ is conilpotent, see \cite[Section~5.8.4]{Loday}.

Let $\mathscr{C}$ be a $\dg$-cooperad, $\mathscr{P}$ a $\dg$-operad,
 and $\varphi:\mathscr{C}\rightarrow \mathscr{P}$ a twisting
morphism as in \cite[Section~11.1.1]{Loday}. The \emph{bar construction} ${\barc}_{\varphi}$ is a functor from the category of $\dg$-$\mathscr{P}$-algebras to
 the category of conilpotent $\dg$-$\mathscr{C}$-coalgebras, defined on a $\dg$-$\mathscr{P}$-algebra $A$ by
 $${\barc}_{\varphi}A:=(\mathscr{C}\circ_{\varphi}\mathscr{P})\circ_{\mathscr{P}}A,$$
 where $\circ_{\varphi}$ denotes the right-twisted composite product and
 $\circ_{\mathscr{P}}$ denotes the relative composite product over $\mathscr{P}$, see \cite[Sections~6.4.7 and 11.2.1 ]{Loday}.

Dually, the \emph{cobar construction}
$\Omega_{\varphi}$ is a functor from the category of conilpotent $\dg$-$\mathscr{C}$-coalgebras to the category of
$\dg$-$\mathscr{P}$-algebras, defined on a conilpotent $\dg$-$\mathscr{C}$-algebra $C$ by
$$\Omega_{\varphi}C:=(\mathscr{P}\circ_{\varphi}\mathscr{C})\circ^{\mathscr{C}}C,$$
where $\circ_{\varphi}$ denotes the left-twisted composite product and
 $\circ^{\mathscr{C}}$ denotes the relative composite product over $\mathscr{C}$, see \cite[Sections~6.4.7 and 11.2.1]{Loday}.

Let $W$, $V$ be two $\mathscr{P}_{\infty}$-algebras. Then an \emph{$\infty$-morphism} $f:W\rightarrow V$ is a
$\dg$-$\mathbb{N}$-module morphism $\mathscr{P}^{\mbox{!`} }\rightarrow\mathop{\End}_V^W$, where
$\mathop{\End}_V^W(n)=\Hom(W^{\otimes n},V)$. Moreover,
$f$ is an \emph{$\infty$-quasi-isomorphism} if $f$ sends the counit in $\mathscr{P}^{\mbox{!`} }$
to a quasi-isomorphism in $\mathop{\End}_V^W(1)$.

\subsection{Homotopy operadic algebras}

Let $(W,d_W)$ and $(V,d_V)$ be chain complexes that are $\dg$-$k$-modules. Assume $i$ and $p$ are chain maps and $h$ is chain homotopy as in the diagram\\
\centerline{\xymatrix{
*+[r]{(V,d_V)} \ar@(dl,ul)[]^{h} \ar@<+.75ex>[rr]^-p
&& (W,d_W) \ar@<+.75ex>[ll]^-{i} }.}
$W$ is a \emph{homotopy retract} of $V$ if $\Id_V-{i\circ p}=d_V\circ h+h\circ d_V $ and $i$ is a quasi-isomorphism.
Moreover, $W$ is a \emph{deformation retract} of $V$ if
we also have $ \Id_W=p\circ i$.

\begin{theorem}\cite[Theorem~10.3.1]{Loday}\label{HTT}
Let $ \mathscr{P}$ be a Koszul operad
and $(W,d_W)$ a homotopy retract of $(V,d_V)$. Any $\mathscr{P}_{\infty}$-algebra
structure on $V$ can be transferred to a $\mathscr{P}_{\infty}$-algebra structure on $W$ such that $i$ extends to an
${\infty}$-quasi-isomorphism.
\end{theorem}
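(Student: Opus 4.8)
The plan is to use the explicit description of a $\mathscr{P}_\infty$-algebra structure on $V$ as a twisting morphism $\mathscr{P}^{\text{!`}} \to \End_V$, equivalently as a square-zero coderivation on the cofree $\mathscr{P}^{\text{!`}}$-coalgebra $\mathscr{P}^{\text{!`}}\circ V$, and then to transport this coderivation along the homotopy retract data $(i,p,h)$ by a sum over trees. Concretely, since $\mathscr{P}$ is Koszul we have $\mathscr{P}_\infty = \Omega\mathscr{P}^{\text{!`}}$, so a $\mathscr{P}_\infty$-structure on $V$ is the same as a Maurer--Cartan element $\varphi_V$ in the convolution (pre-)Lie algebra $\Hom(\mathscr{P}^{\text{!`}}, \End_V)$. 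First I would write $\varphi_V = \sum_n \varphi_V^{(n)}$ according to the weight grading of $\mathscr{P}^{\text{!`}}$, with $\varphi_V^{(0)} = d_V$, and record that the Maurer--Cartan equation $\partial(\varphi_V) + \tfrac{1}{2}[\varphi_V,\varphi_V] = 0$ (or the analogous identity in characteristic $2$) encodes exactly the homotopy-coherence of the operations on $V$.

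Next I would define the transferred structure map $\varphi_W : \mathscr{P}^{\text{!`}} \to \End_W$ by the standard tree formula: on a cogenerator indexed by a weight-$n$ element of $\mathscr{P}^{\text{!`}}$, decompose it via the (reduced) decomposition map of the cooperad into a sum of planar trees, and to each such tree assign the composite that reads $i$ on each leaf, $p$ at the root, the operation $\varphi_V$ at each internal vertex, and the homotopy $h$ on each internal edge. Equivalently, in the convolution algebra one sets $\varphi_W$ to be the image of $\varphi_V$ under the map induced by $p$, $i$, and $h$; this is the ``van der Laan'' / ``Berglund'' formula, carried out degreewise in the weight grading so that it is a well-defined finite sum on each arity. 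I would then define the $\infty$-morphism $i_\infty : \mathscr{P}^{\text{!`}} \to \End_V^W$ by a similar tree sum, but with the root edge decorated by $h$ instead of by $p$ (and $i$ still on the leaves, $\varphi_V$ at the vertices, $h$ on internal edges); its weight-$0$ component is just $i$ itself, which establishes that $i_\infty$ is an $\infty$-quasi-isomorphism once we know it is an $\infty$-morphism.

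The substantive verifications are two. First, that $\varphi_W$ is genuinely a Maurer--Cartan element, i.e.\ that the tree sum squares to zero: this is where the homotopy retract identity $\Id_V - i\circ p = d_V\circ h + h\circ d_V$ does all the work, since applying the differential to a decorated tree produces, via Leibniz, terms in which $d_V$ hits an $h$ on an internal edge and is rewritten using this identity; the $i\circ p$ term splits the edge (reassembling into the cooperad decomposition on $\mathscr{P}^{\text{!`}}$, hence into the defining relation $\partial\varphi_W + \tfrac12[\varphi_W,\varphi_W]$), while the $d_V\circ\varphi_V + \varphi_V\circ d_V$ terms are absorbed using that $\varphi_V$ is itself Maurer--Cartan. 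Second, that $i_\infty$ is a morphism of $\mathscr{P}_\infty$-algebras, i.e.\ compatible with $\varphi_V$ on the source and $\varphi_W$ on the target; this is an analogous but longer bookkeeping argument with the same input. I expect the main obstacle to be purely combinatorial: organizing the sign-free (characteristic $2$) but still intricate cancellation of boundary terms in the tree sums, and checking that each step respects the conilpotency/weight filtration so that all sums are finite and the coderivations are well defined. Since this is precisely \cite[Theorem~10.3.1]{Loday}, I would in fact cite that reference for the detailed cancellation and only indicate here the shape of the argument; no essential new idea beyond the cited proof is needed, the point for us being that the transferred structure and the extension of $i$ are given by these explicit tree formulae, which is what we shall use in Section~\ref{sect:MinimalHirschBrown}.
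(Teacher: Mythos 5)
The paper offers no proof of this statement at all: it is imported verbatim as \cite[Theorem~10.3.1]{Loday}, so your sketch of the tree-sum transfer formula and the Maurer--Cartan verification is consistent with, and ultimately defers to, exactly the same source the paper cites. Your outline of the standard argument is correct and nothing further is needed here.
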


This theorem, known as the Homotopy Transfer Theorem, is a generalization of \cite[Theorem~1]{Kade} and will be used in Sections
 \ref{sect:MinimalHirschBrown} and \ref{secMult}
to construct minimal Hirsch-Brown models
and minimal models discussed by Carlsson. In these constructions, we also use the following property of the bar construction:

\begin{theorem}\cite[Proposition~11.2.3]{Loday}\label{barquasi}
Let $\varphi:\mathscr{C}\rightarrow \mathscr{P}$ be an operadic twisting morphism and $A$, $A'$ $\dg$-$\mathscr{P}$-algebras.
If $f: A\rightarrow A'$ is a quasi-isomorphism, then
$f$ induces a quasi-isomorphism between the $\dg$-$\mathscr{C}$-coalgebras $\barc_{\varphi}A$ and $\barc_{\varphi}A'$.
\end{theorem}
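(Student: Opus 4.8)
The plan is to realize $\barc_{\varphi}f\colon \barc_{\varphi}A\to\barc_{\varphi}A'$ as a chain map of filtered complexes that is a quasi-isomorphism on associated graded objects, and then to appeal to the standard convergence and comparison theorem for spectral sequences of filtered complexes. Recall first that, as a $\dg$-$\mathbb{N}$-module, $\barc_{\varphi}A\cong\mathscr{C}\circ A=\bigoplus_{n}\mathscr{C}(n)\otimes A^{\otimes n}$, and that, because $f$ is a morphism of $\dg$-$\mathscr{P}$-algebras, the induced map is $\barc_{\varphi}f=\bigoplus_{n}\mathrm{id}_{\mathscr{C}(n)}\otimes f^{\otimes n}$; by functoriality of the bar construction it is a morphism of conilpotent $\dg$-$\mathscr{C}$-coalgebras, in particular a chain map for the differential $d_1+d_2$, where $d_1$ is induced by $d_{\mathscr{C}}$ and $d_A$ and $d_2$ is the twisted term assembled from the infinitesimal decomposition of $\mathscr{C}$, the twisting morphism $\varphi$, and the $\mathscr{P}$-action on $A$.

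Next I would filter both $\barc_{\varphi}A$ and $\barc_{\varphi}A'$ by the conilpotency filtration $F_{\bullet}$ of Section~\ref{coalgebras}; since the bar construction is conilpotent this filtration is exhaustive, it is bounded below, and $\barc_{\varphi}f$ preserves it. The crucial observation is that $d_1$ preserves the filtration degree whereas $d_2$ strictly lowers it: $d_2$ factors through the reduced cocomposition of $\mathscr{C}$ and consumes a summand of $\overline{\mathscr{C}}$ via $\varphi$ (a twisting morphism kills the coaugmentation of $\mathscr{C}$), and since the conilpotency filtration is compatible with the cooperad structure, removing a summand of positive conilpotency degree strictly decreases the filtration. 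Hence the differential induced on $\mathrm{gr}^{F}\barc_{\varphi}A$ is $d_1$ alone; that is, $\mathrm{gr}^{F}\barc_{\varphi}A\cong(\mathscr{C}\circ A,d_1)=\bigoplus_{n}\bigl(\mathscr{C}(n)\otimes A^{\otimes n},\,d_{\mathrm{int}}\bigr)$ as a complex, and under this identification $\mathrm{gr}^{F}(\barc_{\varphi}f)=\bigoplus_{n}\mathrm{id}_{\mathscr{C}(n)}\otimes f^{\otimes n}$.

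To finish, I would check that $\mathrm{gr}^{F}(\barc_{\varphi}f)$ is a quasi-isomorphism and then pass back to $\barc_{\varphi}f$. As $k$ is a field, the K\"unneth formula gives that $f^{\otimes n}\colon A^{\otimes n}\to(A')^{\otimes n}$ is a quasi-isomorphism for every $n$, hence so is $\mathrm{id}_{\mathscr{C}(n)}\otimes f^{\otimes n}$; since homology commutes with direct sums, $\mathrm{gr}^{F}(\barc_{\varphi}f)$ is a quasi-isomorphism. Because both filtrations are exhaustive and bounded below, the associated homology spectral sequences converge to $H_{*}(\barc_{\varphi}A)$ and $H_{*}(\barc_{\varphi}A')$ respectively, and a quasi-isomorphism on the associated graded complexes (an isomorphism on the $E^{1}$-pages) forces an isomorphism on the abutments; therefore $\barc_{\varphi}f$ is a quasi-isomorphism.

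The main obstacle I anticipate is not a new idea but the verification, in the generality of an arbitrary twisting morphism, that $d_2$ genuinely strictly lowers the conilpotency filtration --- this rests on pinning down the formula for $d_2$, on the fact that a twisting morphism vanishes on the coaugmentation, and on the compatibility of the conilpotency filtration with the cooperad structure --- together with checking that this increasing exhaustive filtration is bounded below in the sense demanded by the classical convergence theorem. These are formal but must be lined up carefully with the chosen conventions. (One can instead obtain the statement from the bar--cobar adjunction and the standard model structure on $\dg$-$\mathscr{P}$-algebras, but the filtration argument above is more self-contained.)
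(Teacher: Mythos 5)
The paper offers no proof of this statement---it is quoted directly from Loday--Vallette---so the comparison is with the standard argument in that reference, which has exactly the shape you describe: filter the bar construction so that the associated graded differential is $d_1$ alone, apply K\"unneth over the field $k$, and invoke the classical convergence theorem for exhaustive, bounded-below filtrations. Your skeleton and all the peripheral steps (the identification $\barc_{\varphi}A\cong\mathscr{C}\circ A$ as graded modules, functoriality of $\barc_{\varphi}f$, K\"unneth, commutation of homology with direct sums, the $E^1$-comparison) are fine.

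The genuine gap is the filtration you name. The filtration $F_rC=\{x \mid x_i=0 \text{ for } i>r\}$ of Section~\ref{coalgebras} measures the \emph{arity} of the components $x_i\in\mathscr{C}(i)\otimes C^{\otimes i}$ of $\Delta_C(x)$. With respect to arity, $d_2$ does \emph{not} strictly drop the degree in general: $d_2$ extracts a cooperation from $\overline{\mathscr{C}}$ and applies $\varphi$, and if that cooperation has arity $1$ the number of $A$-factors is unchanged. This is not a marginal case---it is the \emph{only} case for the unary cooperad $\mathscr{S}$ on which this paper relies, where $\barc_{\kappa}A\cong S_{\mathbf{c}}\widetilde{\otimes}A$ lies entirely in $F_1$, so your filtration has length one, its associated graded carries the full differential $d_1+d_2$, and the spectral-sequence argument degenerates into a tautology. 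The fix is the one your own heuristic already points toward: filter instead by the coradical filtration of the conilpotent cooperad $\mathscr{C}$ itself (for the weight-graded Koszul dual cooperads appearing here, this is the weight filtration), extended to $\mathscr{C}\circ A$ by ignoring the $A$-factors. Since a twisting morphism vanishes on the coaugmentation, the factor consumed by $d_2$ lies in $\overline{\mathscr{C}}$, which has positive coradical degree, and compatibility of the coradical filtration with the infinitesimal decomposition gives the strict drop; $d_1$ preserves this filtration because $d_{\mathscr{C}}$ is a coderivation. With that single substitution the rest of your argument goes through unchanged and recovers the proof in the cited source.
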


The bar and cobar constructions form adjoint functor pair.

\begin{proposition}\cite[Corollary~11.3.5]{Loday}\label{barcobar}
Let $\mathscr{P}$ be a Koszul operad with canonical twisting morphism
$\kappa:\mathscr{P}^{\mbox{!`} }\rightarrow \mathscr{P}$. For every $\dg$-$\mathscr{P}$-algebra $A$, the counit of the adjunction
$$\epsilon_{\kappa}:\Omega_{\kappa}\barc_{\kappa}A\rightarrow A$$ is a quasi-isomorphism of $\dg$-$\mathscr{P}$-algebras.
Dually, for every conilpotent $\dg$-$\mathscr{P}^{\mbox{!`} }$-coalgebra $C$, the unit of the adjunction
$$\nu_{\kappa}:C\rightarrow \barc_{\kappa}\Omega_{\kappa}C$$
is a quasi-isomorphism of $\dg$-$\mathscr{P}^{\mbox{!`} }$-coalgebras.
\end{proposition}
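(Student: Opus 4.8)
\emph{Proof proposal.} The plan is to reduce both statements to the acyclicity of the two operadic Koszul complexes attached to $\kappa$, and then to transport that acyclicity to the level of (co)algebras by a weight-filtration spectral sequence. The only place the Koszulness of $\mathscr{P}$ enters is through the reformulation of its definition (\cite[Theorem~7.4.2]{Loday}): since $\mathscr{P}^{\mbox{!`}}\to\barc\mathscr{P}$ is a quasi-isomorphism, the canonical twisting morphism $\kappa:\mathscr{P}^{\mbox{!`}}\to\mathscr{P}$ is a Koszul morphism, so the left- and right-twisted composite products $\mathscr{P}\circ_{\kappa}\mathscr{P}^{\mbox{!`}}$ and $\mathscr{P}^{\mbox{!`}}\circ_{\kappa}\mathscr{P}$, equipped with their twisting differentials $d_{\kappa}$, are acyclic --- the natural (co)augmentations relating them to the unit $I$ being quasi-isomorphisms, with the homology concentrated in weight zero.

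For the counit, I would first unwind the definitions of Section \ref{coalgebras}: $\Omega_{\kappa}\barc_{\kappa}A$ has underlying $\dg$-$k$-module $\mathscr{P}\circ\mathscr{P}^{\mbox{!`}}\circ A$, and its differential is a sum of three parts --- the internal differential $d_A$; the Koszul differential $d_{\kappa}$ coming from $\kappa$ and the (de)composition maps of $\mathscr{P}$ and $\mathscr{P}^{\mbox{!`}}$; and a term $d_{\mathrm{str}}$ built from the structure map $\gamma_A:\mathscr{P}(A)\to A$ and the cooperad coproduct of $\mathscr{P}^{\mbox{!`}}$. Filter by the total weight carried by the $\mathscr{P}$- and $\mathscr{P}^{\mbox{!`}}$-factors: $d_{\kappa}$ merely transfers weight from $\mathscr{P}^{\mbox{!`}}$ to $\mathscr{P}$ and so preserves the filtration degree, $d_A$ preserves it, and $d_{\mathrm{str}}$ strictly lowers it. The associated graded complex is therefore $(\mathscr{P}\circ_{\kappa}\mathscr{P}^{\mbox{!`}})\circ A$ with differential $d_{\kappa}+d_A$; the acyclicity recalled above together with the K\"unneth formula over the field $k$ shows its homology is $H_\ast(A)$, concentrated in weight zero. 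One then checks that $\epsilon_{\kappa}$ is a filtered map which on the associated graded is the identity of $(A,d_A)$ in weight zero and zero elsewhere --- hence a quasi-isomorphism on the associated graded. Since the weight filtration is exhaustive and bounded below, the comparison theorem for filtered complexes upgrades this to $\epsilon_{\kappa}$ being a quasi-isomorphism.

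The unit $\nu_{\kappa}:C\to\barc_{\kappa}\Omega_{\kappa}C$ is treated dually: $\barc_{\kappa}\Omega_{\kappa}C$ has underlying module $\mathscr{P}^{\mbox{!`}}\circ\mathscr{P}\circ C$ with differential $d_C+d_{\kappa}+d_{\mathrm{str}}$, where $d_{\mathrm{str}}$ now uses the $\mathscr{P}^{\mbox{!`}}$-coalgebra structure map of $C$; filtering by total weight, the associated graded is $(\mathscr{P}^{\mbox{!`}}\circ_{\kappa}\mathscr{P})\circ C$ with differential $d_{\kappa}+d_C$, whose homology is $H_\ast(C)$ by the acyclicity of $\mathscr{P}^{\mbox{!`}}\circ_{\kappa}\mathscr{P}$, and $\nu_{\kappa}$ is again a filtered quasi-isomorphism on the associated graded. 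The essential point is that the convergence of this second spectral sequence --- that is, the exhaustiveness of the filtration --- holds precisely because $C$ is conilpotent, so that $C=\bigcup_{r}F_{r}C$; this is the only use of the conilpotence hypothesis, and it cannot be removed.

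The step I expect to be the main obstacle is this comparison: pinning down the weight filtration carefully enough that the associated graded differential is \emph{exactly} the operadic Koszul differential together with the internal differential, with no cross terms surviving, and then verifying the convergence hypotheses --- which in the conilpotent-coalgebra case is genuinely delicate.
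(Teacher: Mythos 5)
First, note that the paper offers no proof of this proposition: it is quoted verbatim from Loday--Vallette \cite[Corollary~11.3.5]{Loday}, and your sketch is essentially a reconstruction of the argument given there (Theorems 11.3.3--11.3.4 of that book): Koszulness of $\mathscr{P}$ means exactly that $\kappa$ is a Koszul morphism, hence that the twisted composite products $\mathscr{P}\circ_{\kappa}\mathscr{P}^{\mbox{!`}}$ and $\mathscr{P}^{\mbox{!`}}\circ_{\kappa}\mathscr{P}$ are acyclic, and one then transports this to (co)algebras by a filtration argument. Your treatment of the counit is correct as it stands: on $\mathscr{P}\circ\mathscr{P}^{\mbox{!`}}\circ A$ the term $d_{\mathrm{str}}$ built from $\gamma_A$ strictly \emph{lowers} the total operadic weight, so the weight filtration is increasing, bounded below and exhaustive, the associated graded is $(\mathscr{P}\circ_{\kappa}\mathscr{P}^{\mbox{!`}})\circ A$ with the internal differential of $A$, and the classical convergence theorem applies.

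The genuine gap is in the dual half, and it is not quite the one you flagged. On $\mathscr{P}^{\mbox{!`}}\circ\mathscr{P}\circ C$ the term $d_{\mathrm{str}}$ uses $\Delta_C:C\to\mathscr{P}^{\mbox{!`}}\circ C$ followed by $\kappa$, so it \emph{raises} the total operadic weight by one rather than lowering it. Filtering ``by total weight'' therefore produces a decreasing, unbounded filtration, and convergence of the resulting spectral sequence is exactly what fails in general; moreover the exhaustiveness of the operadic weight filtration is automatic and has nothing to do with conilpotence, so your stated reason for needing that hypothesis is attached to the wrong filtration. The fix, which is what Loday--Vallette actually do, is to filter instead (or in addition) by the coradical filtration $F_rC$ of $C$ from Section \ref{coalgebras}: the term $d_{\mathrm{str}}$ lowers the coradical degree, this filtration is increasing and bounded below, and its exhaustiveness $C=\bigcup_r F_rC$ is precisely the conilpotence hypothesis. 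With that substitution the associated graded is again $(\mathscr{P}^{\mbox{!`}}\circ_{\kappa}\mathscr{P})\circ C$ up to the relevant grading and the argument closes. You correctly identified this comparison as the delicate step, but as written the chosen filtration would not converge, so the proposal does not yet constitute a complete proof of the coalgebra statement.
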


The relation between $\infty$-quasi-isomorphisms and quasi-isomorphisms is given by the following:
\begin{theorem}\cite[Theorem~11.4.9]{Loday} \label{zigzag}
Let $\mathscr{P}$ be a Koszul operad and $A$, $A'$ $\dg$-$\mathscr{P}_{\infty}$-algebras.
 There exists an $\infty$-quasi-isomorphism of $\dg$ $\mathscr{P}_{\infty}$-algebras $A\rightarrow A'$
  if and only if there exists a zigzag of quasi-isomorphisms of $\dg$-$\mathscr{P}_{\infty}$-algebras
 $A\leftarrow\bullet\rightarrow\bullet \leftarrow \bullet \rightarrow\bullet \ldots\rightarrow A'$.
\end{theorem}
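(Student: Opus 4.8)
The plan is to use the bar and cobar constructions to reformulate $\infty$-morphisms as strict morphisms of coalgebras, where the two notions of quasi-isomorphism agree, and then to rectify using the adjunction of Proposition~\ref{barcobar}. Throughout, let $\iota\colon\mathscr{P}^{\mbox{!`} }\to\mathscr{P}_\infty=\Omega\mathscr{P}^{\mbox{!`} }$ denote the universal twisting morphism, and let $\barc_\iota$ and $\Omega_\iota$ be the associated bar and cobar functors. The first ingredient is the standard dictionary: an $\infty$-morphism $f\colon A\to A'$ of $\dg$-$\mathscr{P}_\infty$-algebras is the same datum as a morphism of conilpotent $\dg$-$\mathscr{P}^{\mbox{!`} }$-coalgebras $\widehat f\colon\barc_\iota A\to\barc_\iota A'$, and, since the linear component $f_1$ of $f$ agrees up to the twisting part of the differentials with the leading term of $\widehat f$, the morphism $f$ is an $\infty$-quasi-isomorphism if and only if $\widehat f$ is a quasi-isomorphism of the underlying chain complexes.

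To pass from an $\infty$-quasi-isomorphism to a zigzag, suppose $f\colon A\to A'$ is an $\infty$-quasi-isomorphism, so that $\widehat f$ is a quasi-isomorphism. Applying $\Omega_\iota$, I would form the diagram
\[
A\;\xleftarrow{\ \epsilon_A\ }\;\Omega_\iota\barc_\iota A\;\xrightarrow{\ \Omega_\iota\widehat f\ }\;\Omega_\iota\barc_\iota A'\;\xrightarrow{\ \epsilon_{A'}\ }\;A'
\]
of strict morphisms of $\dg$-$\mathscr{P}_\infty$-algebras, where $\epsilon_A$ and $\epsilon_{A'}$ are the counits of the bar--cobar adjunction; by the analogue of Proposition~\ref{barcobar} for the twisting morphism $\iota$ these counits are quasi-isomorphisms. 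It then remains to check that $\Omega_\iota\widehat f$ is a quasi-isomorphism, and this is the step I expect to be the main obstacle, since the cobar functor does not preserve quasi-isomorphisms of arbitrary coalgebras. The argument I have in mind uses conilpotence: filter $\Omega_\iota C=(\mathscr{P}_\infty\circ_\iota\mathscr{P}^{\mbox{!`} })\circ^{\mathscr{P}^{\mbox{!`} }}C$ by the coradical filtration of $C$, so that on the associated graded the twisting term of the cobar differential drops out and only the differential of $\mathscr{P}_\infty$ together with $d_C$ survives; a quasi-isomorphism $C\to C'$ then induces an isomorphism on the $E^1$-pages of the resulting spectral sequences. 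Since $\barc_\iota A$ is conilpotent, this filtration is exhaustive and, in each homological degree, bounded, so the spectral sequences converge and the $E^1$-isomorphism forces $\Omega_\iota C\to\Omega_\iota C'$ to be a quasi-isomorphism; this is the same mechanism that underlies Theorem~\ref{barquasi}. Applying it to $\widehat f$ shows that the displayed diagram is the required zigzag.

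For the converse, I would note that a strict quasi-isomorphism of $\dg$-$\mathscr{P}_\infty$-algebras is in particular an $\infty$-quasi-isomorphism, with all higher components of the $\infty$-morphism taken to be zero; that $\infty$-morphisms are closed under composition; and that every $\infty$-quasi-isomorphism of $\dg$-$\mathscr{P}_\infty$-algebras admits an $\infty$-quasi-isomorphism in the opposite direction, which follows from the obstruction theory of \cite[Section~10.4]{Loday} and uses the Koszulness of $\mathscr{P}$. Given a zigzag $A\leftarrow\bullet\rightarrow\bullet\leftarrow\cdots\rightarrow A'$ of strict quasi-isomorphisms, I would then replace each backward-pointing arrow by such an $\infty$-inverse and compose all the resulting $\infty$-quasi-isomorphisms to obtain a single $\infty$-quasi-isomorphism $A\to A'$, which finishes the proof.
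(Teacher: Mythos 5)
The paper offers no proof of this statement---it is quoted verbatim from Loday--Vallette, Theorem 11.4.9---and your proposal follows essentially the same route as the proof in that reference: rectification through the bar--cobar adjunction along $\iota$ for the forward implication, and invertibility of $\infty$-quasi-isomorphisms (LV, Theorem 10.4.4) plus closure of $\infty$-morphisms under composition for the converse. The overall architecture is sound.

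One step, however, is stated in a form that is false and needs repair, and it is exactly the step you yourself flagged as the crux. You write that for the coradical filtration on the cobar construction, ``a quasi-isomorphism $C\to C'$ then induces an isomorphism on the $E^1$-pages of the resulting spectral sequences.'' For arbitrary quasi-isomorphisms of conilpotent $\mathscr{P}^{\mbox{!`}}$-coalgebras this is precisely what fails---a quasi-isomorphism of total complexes need not induce a quasi-isomorphism on the associated graded of the coradical filtration, which is why $\Omega_\iota$ does not preserve all quasi-isomorphisms. What saves the argument in your application is not that $\widehat f$ is a quasi-isomorphism but that it is a \emph{filtered} quasi-isomorphism: since $\widehat f=\barc_\iota f$ is a morphism of conilpotent coalgebras it respects the coradical filtration, and on the associated graded it reduces to $\mathscr{P}^{\mbox{!`}}\circ f_1$, which is a quasi-isomorphism over a field exactly because $f_1$ is one---that is, because $f$ is an $\infty$-quasi-isomorphism. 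So the hypothesis you must feed into the spectral-sequence comparison is the $\infty$-quasi-isomorphism property of $f$ directly, not the derived fact that $\widehat f$ is a quasi-isomorphism. With that substitution (and the usual care that the filtration is exhaustive and bounded below so that the comparison theorem for the spectral sequences applies), your argument closes and agrees with the cited proof.
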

\noindent Such a zigzag of quasi-isomorphism will be written $A\leftrightsquigarrow A'$.
\subsection{The Poincar\'{e}-Birkhoff-Witt basis }\label{PBWbasis} We already defined Koszul duality for an operad by using bar construction. The  non-derived Koszul duality was introduced by Priddy \cite{Priddy} for algebras and generalized by Hoffbeck \cite{Hoffbeck} for operads by considering the existence of a certain basis, called Poincar\'{e}-Birkhoff-Witt ($\PBW$) basis. Hoffbeck's criterion asserts that an operad is Koszul if it admits a PBW basis.

In order to define a $\PBW$ basis for an operad $\mathscr{P}$, we consider the path sequence of the tree monomials of $\mathscr{P}$ as in \cite[Definition~3.4.1.2]{Dotsenko}. Then we order the path sequences corresponding to the trees by the graded path lexicographic order  \cite[Definition~3.4.1.7]{Dotsenko}. Briefly, given two path sequences $p$ and $ p'$ of the same arity, we have $p\prec p'$ if and only if either
\begin{itemize}
\item[(i)] the longer sequence is bigger,
\item[ ] or
\item[(ii)] if they have the same length, then we compare the first (leftmost) letters where they differ.
\end{itemize}
For instance, suppose that we have tree monomials \hspace{5pt}
$ \begin{tikzpicture}[scale=0.8,baseline=1mm]
\draw [thick](0,0.57) -- (0,0.8);
\draw [thick](0,0.02) -- (0,-0.3);
\draw  node at (0,0.3) [draw,circle, scale=0.7]{a};
\end{tikzpicture} ,
 \begin{tikzpicture}[scale=0.8,baseline=-1mm]
\draw [thick](-0.24,0.2) -- (-0.5,0.5);
\draw [thick](0.24,0.2) -- (0.5,0.5);
\draw [thick](0,-0.3) -- (0,-0.7);
\draw  node at (0,0) [draw,circle, scale=0.7]{b};
\end{tikzpicture},
\begin{tikzpicture}[scale=0.8,baseline=-1mm]
\draw [thick](-0.2,0.18) -- (-0.5,0.5);
\draw [thick](0.2,0.18) -- (0.5,0.5);
\draw [thick](0,-0.26) -- (0,-0.7);
\draw  node at (0,0) [draw,circle, scale=0.7]{c};
\end{tikzpicture}
$
equipped with a monomial order $a\prec b \prec c$. Let us consider the following tree monomials:
$$
 \begin{tikzpicture}[scale=0.8,baseline=-1mm]
\draw [thick](-0.24,0.2) -- (-0.5,0.5);
\draw [thick](0.24,0.2) -- (0.7,0.7);
\draw [thick](0,-0.3) -- (0,-0.7);
\draw  node at (0,0) [draw,circle, scale=0.7]{b};
\draw  node at (-0.53,0.76) [draw,circle, scale=0.7]{ a };
\draw [thick](-0.53,1.02) -- (-0.53,1.3);
\end{tikzpicture},\hspace{7pt}
\begin{tikzpicture}[scale=0.8,baseline=-1mm]
\draw [thick](-0.24,0.2) -- (-0.5,0.5);
\draw [thick](0.24,0.2) -- (0.5,0.5);
\draw [thick](0,-0.3) -- (0,-0.7);
\draw  node at (0,0) [draw,circle, scale=0.7]{b};
\draw  node at (-0.53,0.76) [draw,circle, scale=0.7]{ a };
\draw [thick](-0.53,1.02) -- (-0.53,1.3);
\draw  node at (0.53,0.76) [draw,circle, scale=0.7]{ a };
\draw [thick](0.53,1.02) -- (0.53,1.3);
\end{tikzpicture}\mbox{ and }
\begin{tikzpicture}[scale=0.8,baseline=-1mm]
\draw [thick](-0.24,0.2) -- (-0.5,0.5);
\draw [thick](0.24,0.2) -- (0.7,0.7);
\draw [thick](0,-0.3) -- (0,-0.7);
\draw  node at (0,0) [draw,circle, scale=0.7]{b};
\draw  node at (-0.53,0.76) [draw,circle, scale=0.7]{ c };
\draw [thick](-0.76,0.86) -- (-1.15,1.3);
\draw [thick](-0.3,0.86) -- (0.15,1.3);
\end{tikzpicture},\hspace{7pt}
\begin{tikzpicture}[scale=0.8,baseline=-1mm]
\draw [thick](-0.22,0.17) -- (-0.47,0.47);
\draw [thick](0.22,0.17) -- (0.7,0.7);
\draw [thick](0,-0.28) -- (0,-0.7);
\draw  node at (0,0) [draw,circle, scale=0.7]{c};
\draw  node at (-0.53,0.76) [draw,circle, scale=0.7]{ b };
\draw [thick](-0.8,0.88) -- (-1.15,1.3);
\draw [thick](-0.27,0.88) -- (0.15,1.3);
\end{tikzpicture}.$$
The path sequences correspond to the tree monomials have the order $(ba,b)\prec (ba,ba)$ and $(bc,bc,b)\prec (cb,cb,c)$. In other words, $(b;a,1)\prec (b;a,a)$ and $(b;c,c,1)\prec (c;b,b,1)$.
For more details, see \cite[Chapter~2.3, 3.4]{Dotsenko}.

For the next notion, we refer the reader to \cite{Hoffbeck}. For every tree $\tau$, let $\mathcal{B}^{\mathscr{T}(M)}_{\tau}$ be a monomial basis of ${\tau}(M)$ such that each element is a tensor product of elements in $\mathcal{B}^M$. A $\PBW$ basis of a non-symmetric quadratic operad $\mathscr{P}$ is a set $\mathcal{B}^{\mathscr{P}}\subset \mathscr{T}(M)$ of representatives of a base of the module $\mathscr{P}$, containing $1$ and $\mathcal{B}^M$ and for all tree $\tau$ a subset
$\mathcal{B}^{\mathscr{P}}_{\tau}\subset \mathcal{B}^{\mathscr{T}(M)}_{\tau}$ satisfying the following conditions:
\begin{itemize}
	\item  For $\alpha\in \mathcal{B}^{\mathscr{P}}_{\sigma}$ and $\beta\in \mathcal{B}^{\mathscr{P}}_{\tau}$, either the partial composition product $\alpha \circ_i \beta$ is in $\mathcal{B}^{\mathscr{P}}_{\sigma \circ_i \tau}$ or the elements of the basis $\gamma\in \mathcal{B}^{\mathscr{P}}$ which appear in the unique decomposition  $\alpha \circ_i \beta=\Sigma_{\gamma}\ c_{\gamma}\gamma$, satisfy $\gamma\succ \alpha \circ_i \beta$ in $\mathscr{T}(M)$.
	\item Suppose that $\alpha\vert_{\tau_e}$ denotes the restriction of a treewise tensor $\alpha$ to a subtree $\tau_e$ generated by an edge $e$; in other words $\alpha\vert_{\tau_e}$ is the smallest piece of tree that includes the edge $e$ and so it has $2$ vertices.  A treewise tensor $\alpha$ is in $\mathcal{B}^{\mathscr{P}}_{\tau}$ if and only if for every internal edge $e$ of $\tau$, the restricted treewise tensor $\alpha\vert_{\tau_e}$ lie in $\mathcal{B}^{\mathscr{P}}_{\tau_e}$
\end{itemize}
Moreover, by the second condition, a treewise tensor is in the basis if and only if every
subtensor generated by an edge is in the basis. Hence, it is enough to set
the quadratic part of the basis to determine the basis completely. Then we have the following result:
\begin{theorem} \label{HofbeckThm66} \cite[Theorem~6.6]{Hoffbeck} A non-symmetric operad endowed with a non-symmetric $PBW$ basis is Koszul.
\end{theorem}
We use this fact in the proof of Koszulness of the operad in Theorem \ref{mainthm}.
\section{Minimal Models}
\label{sect:MinimalHirschBrown}

In this section, $r$ denotes a positive integer. Here we discuss Hirsch-Brown Models in view of the Homotopy Transfer Theorem.
\subsection{Unary quadratic (co)operads}
Let $(M,R)$ be the quadratic data pair
$$M=(\,0\, ,\, k\textup{v}_1\oplus k\textup{v}_2\oplus \dots \oplus
k\textup{v}_r \, ,\, 0\, ,\, 0\, ,\, \dots\, )$$
and
$$R=
\{\,\,\textup{v}_i\otimes \textup{v}_i\,\,|\,\,1\leq i \leq r\,\, \}
\cup
\{\,\,\textup{v}_i\otimes \textup{v}_j+\textup{v}_j\otimes \textup{v}_i\,\,|\,\,1\leq i<j\leq r\,\,\}.
$$
We define a quadratic operad $\mathscr{W}$ and a quadratic cooperad $\mathscr{S}$ as follows:
$$\mathscr{W}:=\mathscr{P}(M,R)\text{\ \ \ \ and \ \ \ \ }\mathscr{S}:=\mathscr{C}(sM,s^2R).$$
Then considering the identifications
\begin{align*}
t_i=
\hspace{7pt}
\begin{tikzpicture}[scale=0.8,baseline=-1mm]
\draw [thick](0,0.66) -- (0,1);
\draw [thick](0,-0.16) -- (0,-0.5);
\draw  node at (0,0.25) [draw,circle, scale=1]{\small{v}$_i$};
\end{tikzpicture}
\hspace{7pt} \hspace{7pt}\text{ and \ \ \ }
x_i^*=
\hspace{7pt}
\begin{tikzpicture}[scale=0.8,baseline=-1mm]
\draw [thick](0,0.72) -- (0,1.1);
\draw [thick](0,-0.22) -- (0,-0.6);
\draw  node at (0,0.25) [draw,circle, scale=1]{\small{sv}$_i$};
\end{tikzpicture}
\end{align*}
for $i$ in $\{1,2,\dots r\}$, the operad $\mathscr{W}$ is isomorphic to
$(0,\Lambda,0,0, \dots )$  where $\Lambda $ is the exterior algebra $\Lambda(t_1,t_2,\ldots,t_r)$
and the cooperad $\mathscr{S}$ is isomorphic to
$(0,S^*,0,0, \dots )$ where $S$ is the polynomial algebra $k[x_1,\ldots,x_r]$.
\subsection{Minimal Hirsch-Brown models}\label{minmodelsection}
 First note that we consider cochain complexes as chain complexes after multiplying the grading by $-1$. In other words, there exists a categorical isomorphism between the category of chain complexes and the category of cochain complexes by identifying a chain complex $(C,\partial )$ and a cochain complex $(D,\zeta )$ if $C_{-i}=D^i$ and $\partial_{-i}=\zeta ^{i} $. From now on we only work with chain complexes.  The cohomology of a simplicial set $X$ will be denoted by
  $ H^{\bullet}(X;k)$ which corresponds to $ \bigoplus_{m=0}^{\infty} H^{m}(X;k)$ under the isomorphism mentioned above. Hence, $ H^{\bullet}(X;k)$ is trivial in all positive degrees.

 Let $G$ be an elementary abelian $2$-group of rank $r$ with generator set $\{g_1,\ldots,g_r\}$. Then we can identify the group algebra $kG$ with the exterior algebra $\Lambda$ by identifying $1+g_i$ with $t_i$.
 Moreover, the group cohomology $H^{\bullet}(BG;k)$ is isomorphic to the polynomial algebra $S$ as graded $k$-algebras.  Assume that $G$ acts freely on a simplicial set $X$. The goal of this section is to use the techniques discussed in the previous section to give a construction of the minimal Hirsch-Brown model of $X$ which is equivalent to the one constructed in \cite{AlldayPuppe}. In other words, we define a  differential graded $S$-module denoted by $H^{\bullet}(BG;k)\widetilde{\otimes}H^{\bullet}(X;k)$ so that $H^{\bullet}(BG;k)\widetilde{\otimes}H^{\bullet}(X;k)$  is isomorphic to  $H^{\bullet}(BG;k){\otimes}H^{\bullet}(X;k)$ as a left $S$-module, and there exists a zig zag of quasi-isomorphisms  between $H^{\bullet}(BG;k)\widetilde{\otimes}H^{\bullet}(X;k)$ and a differential graded $S$-module which is chain homotopy equivalent to the cochain complex of the Borel construction $EG\times_G X$.

The chain complex $C=C(X;k)$  is a dg-$\mathscr{W}$-algebra by the morphism from $kG\otimes C$ to $C$ which sends
 $g\otimes \sigma $ to $g\sigma $ for any $g\in G$, $\sigma \in C$. Moreover, we have $\mathscr{S}=\mathscr{W}^{\mbox{!`} }$ and $\mathscr{W}_{\infty}=\Omega\mathscr{S}$. Let $j$ denote the inclusion of $\dg$-$\mathscr{W}$-algebras into $\dg$-$\Omega\mathscr{S}$-algebras.
 Since $H(C)$ is a deformation retract of $j(C)$ as $\dg$-$k$-modules,
by Theorem \ref{HTT} there exists a $\Omega\mathscr{S}$-algebra structure on $H(C)$ such that $H(C)$ and $j(C)$ are $\infty $-quasi-isomorphic as
$\Omega\mathscr{S}$-algebras. We know that $\mathscr{W}$ is also a Koszul operad. Then by Theorem \ref{zigzag}, there exists a
 zigzag of quasi-isomorphisms of $\dg$-$\Omega\mathscr{S}$-algebras $H(C)\leftrightsquigarrow j(C)$.

  Note that $\mathscr{S}$ is a connected cooperad, so it is conilpotent.
  Let ${\iota:\mathscr{S}\rightarrow \Omega\mathscr{S}}$ be the universal twisting morphism.
  By Theorem \ref{barquasi}, there is a zigzag of quasi-isomorphisms
  $\barc_{\iota} H(C)\leftrightsquigarrow (\barc_{\iota}\circ j)(C)$ as $\mathscr{S}$-coalgebras.
  As graded $\mathbb{N}$-modules, we have the following isomorphism:
  $$\barc_{\iota} H(C)=\left(\mathscr{S}\circ_{\iota}\Omega \mathscr{S}\right)\circ_{\Omega \mathscr{S}}H(C)\cong S^*{\otimes} H(C).$$
This isomorphism induces a differential on $ S^*{\otimes} H(C)$. We denote the new differential graded $\mathbb{N}$-module by $S^*\widetilde{\otimes} H(C)$.

We consider the ${\mathscr{S}^*}$-algebra $(\barc_{\iota} H(C))^*$ as a version of the minimal Hirsch-Brown model because
\begin{align*}
(\barc_{\iota} H(C))^*&\cong(S^*\widetilde{\otimes}H(C))^*\\
 &\cong S\widetilde{\otimes}(H(C))^*\\
 &\cong H^{\bullet}(BG;k)\widetilde{\otimes}H^{\bullet}(X;k).
\end{align*}

Let $\kappa: \mathscr{S}\rightarrow \mathscr{W}$ be the canonical twisting morphism.
Note that $C(EG;k)$ is $kG$-chain homotopy equivalent to
$S^*\widetilde{\otimes}kG:=\mathscr{S}\circ_{\kappa}\mathscr{W}$,
where both are considered as differential graded right $kG$-modules.
 Also we have $(\barc_{\iota}\circ j)= \barc_{\kappa}$. Hence
\begin{align*}
((\barc_{\iota}\circ j)(C))^*&\cong (S^*\widetilde{\otimes}kG \otimes_{kG}C)^*\\
&\simeq (C(EG;k)\otimes_{kG}C)^*\\
&\simeq C(EG\times_G X;k)^* \\
&= C^{\bullet}(EG\times_G X;k)
\end{align*}
where the last equality is due to our conventions about cochain complexes and the second homotopy equivalence follows from the homotopy equivalence $C(EG\times_G X;k) \simeq C(EG;k)\otimes_{kG} C$ proved in \cite[proof of Theorem~1.2.8]{AlldayPuppe} and \cite[\RN{6}.12]{Dold}.

We can also consider the chain complex $C=C(X;k)$  as a dg-$\mathscr{W}^*$-coalgebra by the morphism from $C$ to $kG\otimes C$ which send $\sigma $ to $\sum _{g\in G}g^*\otimes g^{-1}\sigma $. Hence $C^*$  is a dg-$\mathscr{W}$-algebra as discussed in Section \ref{coalgebras}.
Hence the $\mathscr{S}$-coalgebra $\barc_{\iota} H(C^*)$ is another version of the minimal Hirsch-Brown model. In fact this second version is what we use in Section \ref{secMult}.

\subsection{The Minimal model of Carlsson}\label{minCarls}
Let $N$ be a differential graded $S$-module, so it is a $\dg$-${\mathscr{S}}^*$-algebra. We view $N$ as a $\dg$-$\mathscr{S}$-coalgebra. The goal of this section is to construct Carlsson's minimal model \cite{Carlsson2} for $N$. We construct a $\dg$-$\mathscr{S}$-coalgebra
that is quasi-isomorphic to $N$ and has zero differential when tensored with $k$ over $\mathscr{S}$.

We have $F=F_2(N)$ in the filtration from Section \ref{coalgebras}, so the coalgebra $N$ is conilpotent.
 As a $\dg$-$k$-module $H(N)$ is a deformation retract of $N$.
 We obtain the following deformation retract of $\dg$-$k$-modules by applying
 the functor $\Omega_{\kappa}$, where $\kappa:\mathscr{S}\rightarrow \mathscr{S}^{\mbox{!`} } $
  is the canonical twisting morphism

 \centerline{\xymatrix{
*+[r]{(\Omega_{\kappa}N)} \ar@(dl,ul)[]^{\Omega_{\kappa}(h)} \ar@<+.75ex>[rr]^-{\Omega_{\kappa}(p)}
&& (\Omega_{\kappa}\Homology(N)) \ar@<+.75ex>[ll]^-{\Omega_{\kappa}(i)} }.}

By Theorem \ref{HTT},
$\Omega_{\kappa}N \xleftarrow{\Omega_{\kappa}(i)} \Omega_{\kappa}\Homology(N)$
extends to an $\infty$-quasi-isomorphism of $\dg$-$\Omega\mathscr{S}$-algebras.
Furthermore, we have another deformation retract

 \centerline{\xymatrix{
*+[r]{(\Omega_{\kappa}\Homology(N))} \ar@(dl,ul)[]^{(h')} \ar@<+.75ex>[rr]^-{p'}
&& (H(\Omega_{\kappa}\Homology(N))) \ar@<+.75ex>[ll]^-{i'} }}
\noindent and so
$\Omega_{\kappa}\Homology(N)\xleftarrow{i'} \Homology(\Omega_{\kappa}\Homology(N))$ extends to an
$\infty$-quasi-isomorphism of $\dg$-$\Omega\mathscr{S}$-algebras by Theorem \ref{HTT}.
Combining these two $\infty$-quasi-isomorphisms, we have an
 $\infty$-quasi-isomorphism
of $\dg$-$\Omega\mathscr{S}$-algebras $\Omega_{\kappa}N\leftarrow \Homology(\Omega_{\kappa}\Homology(N))$.
Thus by Theorem \ref{zigzag}, there is a zigzag of quasi-isomorphisms as
$\dg$-$\Omega\mathscr{S}$-algebras
$$\Omega_{\kappa}N \leftrightsquigarrow  \Homology(\Omega_{\kappa}H(N)).$$
Then by Theorem \ref{barquasi}, we have a zigzag of quasi-isomorphisms of $\dg$-$\mathscr{S}$-coalgebras
$$\barc_{\kappa}\Omega_{\kappa}N \leftrightsquigarrow \barc_{\kappa}\Homology(\Omega_{\kappa}\Homology(N)).$$
There is a quasi-isomorphism of $\dg$-$\mathscr{S}$-coalgebras $N\rightarrow \barc_{\kappa}\Omega_{\kappa}N$
 by Proposition \ref{barcobar}. Therefore,
we obtain a zigzag of quasi-isomorphisms of $\dg$-$\mathscr{S}$-coalgebras
 $$N \leftrightsquigarrow \barc_{\kappa}\Homology(\Omega_{\kappa}\Homology(N)).$$
Note that $k\otimes_{S}\barc_{\kappa}\Homology(\Omega_{\kappa}\Homology(N))$ has zero differential. Hence we call the $\dg$-$\mathscr{S}$-coalgebra
$\barc_{\kappa}\Homology(\Omega_{\kappa}\Homology(N))$ the Carlsson minimal model of $N$.

\subsection{A special case of Carlsson's conjecture}
The following is equivalent to Theorem \ref{thm1}:
\begin{theorem}
 Let $k$ be an algebraically closed field of characteristic $2$ and $S$ the polynomial algebra in $r$ variables of degree $-1$ with coefficients in $k$. Assume $(M,\partial)$ is a free $\dg$-$S$-module and $0<\dim_k	\Homology(M)< \infty $.
Further assume that $\chi(\Homology(M)):=\sum\limits_{i\geq 0}(-1)^i \dim_k{\Homology_i(M)}$ is non-zero. Then $2^r\leq \rank_S M$.
\end{theorem}
\begin{proof}
We can consider $M$ as a $\dg$-$\mathscr{S}$-coalgebra.
As in Section \ref{minCarls},
 we have a zigzag of quasi-isomorphism of $\dg$-$\mathscr{S}$-coalgebras
$$
M\leftrightsquigarrow \barc_{\kappa}\Homology(\Omega_{\kappa}\Homology(M)),
$$
where each middle term in this zigzag is free.

If $f: K\rightarrow L$ is a quasi-isomorphism of bounded-below complexes of free modules,
then the mapping cone of $f$ is a bounded-below acyclic complex of free modules.
  Therefore, the mapping cone is contractible and $f$ is split, so $f$ is a homotopy equivalence \cite[Proposition~0.3,
Proposition~0.7]{Brown}. This implies the following zigzag of quasi-isomorphism:
$$
k\otimes_{S}M\leftrightsquigarrow k
\otimes_{S}\barc_{\kappa}\Homology(\Omega_{\kappa}\Homology(M))\cong \Homology(\Omega_{\kappa}H(M)).
$$
Also notice that
$$\chi(\Homology(\Omega_{\kappa}\Homology(M)))=\chi(\Omega_{\kappa}\Homology(M))=2^r\chi(\Homology(M))\neq 0,$$
Thus,
$$2^r\leq  \dim_k(\Homology(\Omega_{\kappa}\Homology(M)))= \dim_k(\Homology(k\otimes_S M))\leq \dim_k(k\otimes_S M)=\rank_S(M).$$
\end{proof}
\section{Multiplicative Structures on Minimal Hirsch-Brown Models}\label{secMult}  
In this section, we will prove that the operad $\tilde{\mathscr{W}}$ defined in Section \ref{ssBV} is an operad that satisfies the properties listed in Theorem \ref{mainthm}.
\subsection{The operad $\tilde{\mathscr{W}}$ in the case $r=1$}\label{ssBVr1}
Consider the associative operad that is generated by a binary operation $\mu_0$, that satisfies the associativity relation $(\mu_0;\mu_0,1) = (\mu_0;1, \mu_0).$
In terms of trees, we have
\[
\begin{tikzpicture}[scale=0.9,baseline=-0.5mm]
	\draw[fill] (0,0) node[ left] {$\mu_0$} circle [radius=0.07];
\end{tikzpicture}\hspace{5pt}=
\begin{tikzpicture}[scale=0.9,baseline=-1mm]
	\draw [thick](-0.2,0.2) -- (-0.5,0.5);
	\draw [thick](0.2,0.2) -- (0.5,0.5);
	\draw [thick](0,-0.3) -- (0,-0.7);
	\draw  node at (0,0) [draw,circle, scale=0.7]{${\mu_0}$};
\end{tikzpicture} \text{ with the relation }
\begin{tikzpicture}[scale=0.9,baseline=-0.5mm]
	\draw [thick](0,-0.4) node[below left] {$\mu_0$} -- (-0.6,0.3) node[ left] {$\mu_0$};
	\draw[fill] (-0.6,0.3) circle [radius=0.07];
	\draw[fill] (0,-0.4) circle [radius=0.07];
\end{tikzpicture} \quad =
\begin{tikzpicture}[scale=0.9,baseline=-0.5mm]
	\draw [thick](0,-0.4) node[below left] {$\mu_0$} -- (0.6,0.3) node[ left] {$\mu_0$};
	\draw[fill] (0.6,0.3) circle [radius=0.07];
	\draw[fill] (0,-0.4) circle [radius=0.07];
\end{tikzpicture}.
\]
Similarly, for an exterior algebra of a single variable, we have an unary operation $t$;
$$
\begin{tikzpicture}[scale=0.9,baseline=-0.5mm]
	\draw[fill] (0,0)  node[ left] {t}circle [radius=0.07];
\end{tikzpicture}\hspace{5pt}= \,
\begin{tikzpicture}[scale=0.9,baseline=-0.5mm]
	\draw [thick](0,0.57) -- (0,0.8);
	\draw [thick](0,0.03) -- (0,-0.3);
	\draw  node at (0,0.3) [draw,circle, scale=0.7]{\Large{t}};
\end{tikzpicture} \text{ with the relation }
\begin{tikzpicture}[scale=0.9,baseline=-0.5mm]
	\draw [thick](0,0) -- (0,0.6) node[ left] {t};
	\draw [thick](0,0) -- (0,-0.3) node[ left] {t};
	\draw[fill] (0,0.6) circle [radius=0.07];
	\draw[fill] (0,-0.3) circle [radius=0.07];
\end{tikzpicture}\, = \,0.
$$
In the case $r=1$, we define a quadratic operad $\tilde{\mathscr{W}}$ by setting generating operations as
$$\begin{tikzpicture}[scale=0.9,baseline=-0.5mm]
	\draw[fill] (0,0)  node[ left] {t}circle [radius=0.07];
\end{tikzpicture}=
\begin{tikzpicture}[scale=0.9,baseline=-0.5mm]
	\draw [thick](0,0.27) -- (0,0.55);
	\draw [thick](0,-0.27) -- (0,-0.65);
	\draw  node at (0,0) [draw,circle, scale=0.7]{\Large{t}};
\end{tikzpicture}\hspace{5pt}, \hspace{2pt}
\begin{tikzpicture}[scale=0.9,baseline=-0.5mm]
	\draw[fill] (0,0) node[ left] {$\mu_0$} circle [radius=0.07];
\end{tikzpicture}=\begin{tikzpicture}[scale=0.9,baseline=-1mm]
	\draw [thick](-0.2,0.2) -- (-0.5,0.5);
	\draw [thick](0.2,0.2) -- (0.5,0.5);
	\draw [thick](0,-0.3) -- (0,-0.7);
	\draw  node at (0,0) [draw,circle, scale=0.7]{${\mu_0}$};
\end{tikzpicture} \hspace{5pt}\text{ and } \hspace{2pt}
\begin{tikzpicture}[scale=0.9,baseline=-0.5mm]
	\draw[fill] (0,0) node[ left] {${\mu_1}$} circle [radius=0.07];
\end{tikzpicture}\hspace{2pt}=
\begin{tikzpicture}[scale=0.9,baseline=-1mm]
	\draw [thick](-0.2,0.2) -- (-0.5,0.5);
	\draw [thick](0.2,0.2) -- (0.5,0.5);
	\draw [thick](0,-0.3) -- (0,-0.7);
	\draw  node at (0,0) [draw,circle, scale=0.7]{${\mu_1}$};
\end{tikzpicture}
$$

\noindent The relations of $\tilde{\mathscr{W}}$ are as follows :
\\
\\
\noindent $R_1$: \hspace{10pt}
\begin{tikzpicture}[scale=0.9,baseline=-0.5mm]
	\draw [thick](0,0) node[below left] {t} -- (0,0.3);
	\draw [thick](0,0) node[above left] {t}-- (0,-0.4);
	\draw[fill] (0,0.3) circle [radius=0.07];
	\draw[fill] (0,-0.4) circle [radius=0.07];
\end{tikzpicture} $\quad = 0$, \hspace{30pt}
$R_2 : $ \hspace{10pt}
\begin{tikzpicture}[scale=0.9,baseline=-0.5mm]
	\draw [thick](0,-0.4) node[below left] {${\mu_1}$} -- (-0.6,0.3) node[ left] {t};
	\draw[fill] (-0.6,0.3) circle [radius=0.07];
	\draw[fill] (0,-0.4) circle [radius=0.07];
\end{tikzpicture} $\quad = 0$\hspace{10pt}, 
\\
\\
\\
$R_3 : $ \hspace{10pt}
\begin{tikzpicture}[scale=0.9,baseline=-0.5mm]
	\draw [thick](0,-0.4) node[below left] {$\mu_0$} -- (-0.6,0.3) node[ left] {$\mu_0$};
	\draw[fill] (-0.6,0.3) circle [radius=0.07];
	\draw[fill] (0,-0.4) circle [radius=0.07];
\end{tikzpicture} $\quad =$
\begin{tikzpicture}[scale=0.9,baseline=-0.5mm]
	\draw [thick](0,-0.4) node[below left] {$\mu_0$} -- (0.6,0.3) node[ left] {$\mu_0$};
	\draw[fill] (0.6,0.3) circle [radius=0.07];
	\draw[fill] (0,-0.4) circle [radius=0.07];
\end{tikzpicture}, \hspace{30pt}
$R_4 : $ \hspace{10pt}
\begin{tikzpicture}[scale=0.9,baseline=-0.5mm]
	\draw [thick](0,-0.4) node[below left] {${\mu_1}$} -- (-0.6,0.3) node[ left] {${\mu_1}$};
	\draw[fill] (-0.6,0.3) circle [radius=0.07];
	\draw[fill] (0,-0.4) circle [radius=0.07];
\end{tikzpicture} $\quad =$
\begin{tikzpicture}[scale=0.9,baseline=-0.5mm]
	\draw [thick](0,-0.4) node[below left] {${\mu_1}$} -- (0.6,0.3) node[ left] {${\mu_1}$};
	\draw[fill] (0.6,0.3) circle [radius=0.07];
	\draw[fill] (0,-0.4) circle [radius=0.07];
\end{tikzpicture} \hspace{10pt}, 
\\
\\
\\
$R_5$: \hspace{10pt}
\begin{tikzpicture}[scale=0.9,baseline=-0.5mm]
	\draw [thick](0,0) node[below left] {t} -- (0,0.3);
	\draw [thick](0,0) node[above left] {$\mu_0$}-- (0,-0.4);
	\draw[fill] (0,0.3) circle [radius=0.07];
	\draw[fill] (0,-0.4) circle [radius=0.07];
\end{tikzpicture} \quad $=$
\begin{tikzpicture}[scale=0.9,baseline=-0.5mm]
	\draw [thick](0,-0.4) node[below left] {$\mu_0$} -- (-0.6,0.3) node[ left] {t};
	\draw[fill] (-0.6,0.3) circle [radius=0.07];
	\draw[fill] (0,-0.4) circle [radius=0.07];
\end{tikzpicture} \quad $ +$
\begin{tikzpicture}[scale=0.9,baseline=-0.5mm]
	\draw [thick](0,-0.4) node[below left] {$\mu_0$} -- (0.6,0.3) node[ left] {t};
	\draw[fill] (0.6,0.3) circle [radius=0.07];
	\draw[fill] (0,-0.4) circle [radius=0.07];
\end{tikzpicture}\quad  $ +$
\begin{tikzpicture}[scale=0.9,baseline=-0.5mm]
	\draw [thick](0,-0.4) node[below left] {${\mu_1}$} -- (0.6,0.3) node[ left] {$t$};
	\draw[fill] (0.6,0.3) circle [radius=0.07];
	\draw[fill] (0,-0.4) circle [radius=0.07];
\end{tikzpicture} \hspace{10pt},
\\
\\
\\
$R_6 : $ \hspace{10pt}
\begin{tikzpicture}[scale=0.9,baseline=-0.5mm]
	\draw [thick](0,-0.4) node[below left] {${\mu_0}$} -- (-0.6,0.3) node[ left] {${\mu_1}$};
	\draw[fill] (-0.6,0.3) circle [radius=0.07];
	\draw[fill] (0,-0.4) circle [radius=0.07];
\end{tikzpicture} $\quad =$
\begin{tikzpicture}[scale=0.9,baseline=-0.5mm]
	\draw [thick](0,-0.4) node[below left] {${\mu_1}$} -- (0.6,0.3) node[ left] {${\mu_0}$};
	\draw[fill] (0.6,0.3) circle [radius=0.07];
	\draw[fill] (0,-0.4) circle [radius=0.07];
\end{tikzpicture}, \hspace{30pt}
$R_7$: \hspace{10pt}
\begin{tikzpicture}[scale=0.9,baseline=-0.5mm]
	\draw [thick](0,0) node[below left] {t} -- (0,0.3);
	\draw [thick](0,0) node[above left] {${\mu_1}$}-- (0,-0.4);
	\draw[fill] (0,0.3) circle [radius=0.07];
	\draw[fill] (0,-0.4) circle [radius=0.07];
\end{tikzpicture} \quad $=$
\begin{tikzpicture}[scale=0.9,baseline=-0.5mm]
	\draw [thick](0,-0.4) node[below left] {${\mu_1}$} -- (0.6,0.3) node[ left] {t};
	\draw[fill] (0.6,0.3) circle [radius=0.07];
	\draw[fill] (0,-0.4) circle [radius=0.07];
\end{tikzpicture} \hspace{10pt},
\\
\\
\\
$R_8 : $ \hspace{10pt}
\begin{tikzpicture}[scale=0.9,baseline=-0.5mm]
	\draw [thick](0,-0.4) node[below left] {${\mu_1}$} -- (-0.6,0.3) node[ left] {$\mu_0$};
	\draw[fill] (-0.6,0.3) circle [radius=0.07];
	\draw[fill] (0,-0.4) circle [radius=0.07];
\end{tikzpicture} $\quad =$
\begin{tikzpicture}[scale=0.9,baseline=-0.5mm]
	\draw [thick](0,-0.4) node[below left] {${\mu_0}$} -- (-0.6,0.3) node[ left] {${\mu_1}$};
	\draw[fill] (-0.6,0.3) circle [radius=0.07];
	\draw[fill] (0,-0.4) circle [radius=0.07];
\end{tikzpicture} \quad $+$
\begin{tikzpicture}[scale=0.9,baseline=-0.5mm]
	\draw [thick](0,-0.4) node[below left] {$\mu_0$} -- (0.6,0.3) node[ left] {${\mu_1}$};
	\draw[fill] (0.6,0.3) circle [radius=0.07];
	\draw[fill] (0,-0.4) circle [radius=0.07];
\end{tikzpicture}
\quad $+$
\begin{tikzpicture}[scale=0.9,baseline=-0.5mm]
	\draw [thick](0,-0.4) node[below left] {${\mu_1}$} -- (0.6,0.3) node[ left] {${\mu_1}$};
	\draw[fill] (0.6,0.3) circle [radius=0.07];
	\draw[fill] (0,-0.4) circle [radius=0.07];
\end{tikzpicture}.
\\
\\
Please see Lemma \ref{Lemmarelationr1} in Section \ref{MultiplicativeStr} to understand where these relations come from.

Now consider the graded path lex order on all quadratics; firstly there is only one $1$-ary quadratic operation and it is represented by
\begin{tikzpicture}[scale=0.9,baseline=-0.5mm]
	\draw [thick](0,0) node[below left] {t} -- (0,0.3);
	\draw [thick](0,0) node[above left] {t}-- (0,-0.4);
	\draw[fill] (0,0.3) circle [radius=0.07];
	\draw[fill] (0,-0.4) circle [radius=0.07];
\end{tikzpicture}.
Secondly, we sort all $2$-ary operations:
$$
\begin{tikzpicture}[scale=0.9,baseline=-0.5mm]
	\draw [thick](0,-0.4) node[below left] {$\mu_0$} -- (0.6,0.3) node[ left] {t};
	\draw[fill] (0.6,0.3) circle [radius=0.07];
	\draw[fill] (0,-0.4) circle [radius=0.07];
\end{tikzpicture}\hspace{5pt}\prec \hspace{5pt}
\begin{tikzpicture}[scale=0.9,baseline=-0.5mm]
	\draw [thick](0,-0.4) node[below left] {${\mu_1}$} -- (0.6,0.3) node[ left] {t};
	\draw[fill] (0.6,0.3) circle [radius=0.07];
	\draw[fill] (0,-0.4) circle [radius=0.07];
\end{tikzpicture}\hspace{5pt}\prec\hspace{5pt}
\begin{tikzpicture}[scale=0.9,baseline=-0.5mm]
	\draw [thick](0,-0.4) node[below left] {t} -- (0,0.3) node[ left] {$\mu_0$};
	\draw[fill] (0,0.3) circle [radius=0.07];
	\draw[fill] (0,-0.4) circle [radius=0.07];
\end{tikzpicture}\hspace{5pt}\prec\hspace{5pt}
\begin{tikzpicture}[scale=0.9,baseline=-0.5mm]
	\draw [thick](0,-0.4) node[below left] {t} -- (0,0.3) node[ left] {${\mu_1}$};
	\draw[fill] (0,0.3) circle [radius=0.07];
	\draw[fill] (0,-0.4) circle [radius=0.07];
\end{tikzpicture}\hspace{5pt}\prec\hspace{5pt}
\begin{tikzpicture}[scale=0.9,baseline=-0.5mm]
	\draw [thick](0,-0.4) node[below left] {$\mu_0$} -- (-0.6,0.3) node[ left] {t};
	\draw[fill] (-0.6,0.3) circle [radius=0.07];
	\draw[fill] (0,-0.4) circle [radius=0.07];
\end{tikzpicture}.
$$
Correspondingly, path sequences of the planar rooted trees are
$$(\mu_0, \mu_0 t) \prec ({\mu_1}, {\mu_1} t) \prec (t \mu_0, t \mu_0)\prec (t{\mu_1}, t{\mu_1} ) \prec (\mu_0 t,\mu_0).$$
Then we sort all $3$-ary operations:
\begin{align*}
\begin{tikzpicture}[scale=0.9,baseline=-0.5mm]
	\draw [thick](0,-0.4) node[below left] {$\mu_0$} -- (0.6,0.3) node[ left] {$\mu_0$};
	\draw[fill] (0.6,0.3) circle [radius=0.07];
	\draw[fill] (0,-0.4) circle [radius=0.07];
\end{tikzpicture}\prec
\begin{tikzpicture}[scale=0.9,baseline=-0.5mm]
	\draw [thick](0,-0.4) node[below left] {$\mu_0$} -- (0.6,0.3) node[ left] {${\mu_1}$};
	\draw[fill] (0.6,0.3) circle [radius=0.07];
	\draw[fill] (0,-0.4) circle [radius=0.07];
\end{tikzpicture}\prec
\begin{tikzpicture}[scale=0.9,baseline=-0.5mm]
	\draw [thick](0,-0.4) node[below left] {${\mu_1}$} -- (0.6,0.3) node[ left] {$\mu_0$};
	\draw[fill] (0.6,0.3) circle [radius=0.07];
	\draw[fill] (0,-0.4) circle [radius=0.07];
\end{tikzpicture}\prec
\begin{tikzpicture}[scale=0.9,baseline=-0.5mm]
	\draw [thick](0,-0.4) node[below left] {${\mu_1}$} -- (0.6,0.3) node[ left] {${\mu_1}$};
	\draw[fill] (0.6,0.3) circle [radius=0.07];
	\draw[fill] (0,-0.4) circle [radius=0.07];
\end{tikzpicture}\prec
\begin{tikzpicture}[scale=0.9,baseline=-0.5mm]
	\draw [thick](0,-0.4) node[below left] {$\mu_0$} -- (-0.6,0.3) node[ left] {${\mu_0}$};
	\draw[fill] (-0.6,0.3) circle [radius=0.07];
	\draw[fill] (0,-0.4) circle [radius=0.07];
\end{tikzpicture}\prec
\begin{tikzpicture}[scale=0.9,baseline=-0.5mm]
	\draw [thick](0,-0.4) node[below left] {$\mu_0$} -- (-0.6,0.3) node[ left] {${\mu_1}$};
	\draw[fill] (-0.6,0.3) circle [radius=0.07];
	\draw[fill] (0,-0.4) circle [radius=0.07];
\end{tikzpicture}\prec
\begin{tikzpicture}[scale=0.9,baseline=-0.5mm]
	\draw [thick](0,-0.4) node[below left] {${\mu_1}$} -- (-0.6,0.3) node[ left] {${\mu_0}$};
	\draw[fill] (-0.6,0.3) circle [radius=0.07];
	\draw[fill] (0,-0.4) circle [radius=0.07];
\end{tikzpicture}\prec
\begin{tikzpicture}[scale=0.9,baseline=-0.5mm]
	\draw [thick](0,-0.4) node[below left] {${\mu_1}$} -- (-0.6,0.3) node[ left] {${\mu_1}$};
	\draw[fill] (-0.6,0.3) circle [radius=0.07];
	\draw[fill] (0,-0.4) circle [radius=0.07];
\end{tikzpicture}.
\end{align*}
Correspondingly, path sequences of the planar rooted trees are
\begin{align*}
\begin{array}{ll}
(\mu_0, \mu_0^2,\mu_0^2)\prec (\mu_0, \mu_0 {\mu_1},\mu_0 {\mu_1})\prec ( {\mu_1}, {\mu_1}\mu_0, {\mu_1}\mu_0) \prec ({\mu_1}, {{\mu_1}}^2,{{\mu_1}}^2)&\\
\prec (\mu_0^2,\mu_0^2,\mu_0)\prec (\mu_0 {\mu_1},\mu_0 {\mu_1}, \mu_0)
\prec ({\mu_1}\mu_0, {\mu_1}\mu_0, {\mu_1})\prec ({\mu_1}^2,{\mu_1}^2,{\mu_1})&.
\end{array}
\end{align*}
Hence, the quadratic part of a non-symmetric $\mathrm{PBW}$ basis is given by
$$
\begin{tikzpicture}[scale=0.9,baseline=-0.5mm]
	\draw [thick](0,-0.4) node[below left] {$\mu_0$} -- (0.6,0.3) node[ left] {t};
	\draw[fill] (0.6,0.3) circle [radius=0.07];
	\draw[fill] (0,-0.4) circle [radius=0.07];
\end{tikzpicture}\hspace{5pt},\hspace{5pt}
\begin{tikzpicture}[scale=0.9,baseline=-0.5mm]
	\draw [thick](0,-0.4) node[below left] {${\mu_1}$} -- (0.6,0.3) node[ left] {t};
	\draw[fill] (0.6,0.3) circle [radius=0.07];
	\draw[fill] (0,-0.4) circle [radius=0.07];
\end{tikzpicture}\hspace{5pt},\hspace{5pt}
\begin{tikzpicture}[scale=0.9,baseline=-0.5mm]
	\draw [thick](0,-0.4) node[below left] {t} -- (0,0.3) node[ left] {$\mu_0$};
	\draw[fill] (0,0.3) circle [radius=0.07];
	\draw[fill] (0,-0.4) circle [radius=0.07];
\end{tikzpicture}\hspace{5pt},\hspace{5pt}
\begin{tikzpicture}[scale=0.9,baseline=-0.5mm]
	\draw [thick](0,-0.4) node[below left] {$\mu_0$} -- (0.6,0.3) node[ left] {$\mu_0$};
	\draw[fill] (0.6,0.3) circle [radius=0.07];
	\draw[fill] (0,-0.4) circle [radius=0.07];
\end{tikzpicture}\hspace{5pt},\hspace{5pt}
\begin{tikzpicture}[scale=0.9,baseline=-0.5mm]
	\draw [thick](0,-0.4) node[below left] {$\mu_0$} -- (0.6,0.3) node[ left] {${\mu_1}$};
	\draw[fill] (0.6,0.3) circle [radius=0.07];
	\draw[fill] (0,-0.4) circle [radius=0.07];
\end{tikzpicture}\hspace{5pt},\hspace{5pt}
\begin{tikzpicture}[scale=0.9,baseline=-0.5mm]
	\draw [thick](0,-0.4) node[below left] {${\mu_1}$} -- (0.6,0.3) node[ left] {$\mu_0$};
	\draw[fill] (0.6,0.3) circle [radius=0.07];
	\draw[fill] (0,-0.4) circle [radius=0.07];
\end{tikzpicture}\hspace{5pt},\hspace{5pt}
\begin{tikzpicture}[scale=0.9,baseline=-0.5mm]
	\draw [thick](0,-0.4) node[below left] {${\mu_1}$} -- (0.6,0.3) node[ left] {${\mu_1}$};
	\draw[fill] (0.6,0.3) circle [radius=0.07];
	\draw[fill] (0,-0.4) circle [radius=0.07];
\end{tikzpicture}.
$$
Correspondingly, path sequences of the quadratic part of the basis is given by
\begin{align*}
\begin{array}{l}
(\mu_0, \mu_0 t)\prec ({\mu_1}, {\mu_1} t)\prec (t \mu_0, t \mu_0)\prec
(\mu_0, \mu_0^2,\mu_0^2)\prec (\mu_0, \mu_0 {\mu_1},\mu_0 {\mu_1})\\
\prec ({\mu_1}, {\mu_1}\mu_0, {\mu_1}\mu_0)\prec ( {\mu_1}, {{\mu_1}}^2,{{\mu_1}}^2).
\end{array}
\end{align*}
The other way around those trees correspond to the elements;
$$(\mu_0;1,t)\prec({\mu_1};1,t)\prec(t;\mu_0 )\prec(\mu_0;1,\mu_0)\prec(\mu_0;1,{\mu_1})\prec({\mu_1};1,\mu_0)
\prec({\mu_1};1,{\mu_1}).$$
\subsection{The operad $\tilde{\mathscr{W}}$ in general}\label{ssBV}
For a positive integer $r$, let  $(M,R)$ be the quadratic data pair consists of
$$M=(0, \bigoplus _{i=1}^{r}k\textup{v}_i , \bigoplus _{L\subseteq T} k\mu_L,\dots)$$
and
\begin{align*}R=
\{\,R^1_{i},R^2_{i,j}, R^3_{K,L}, R^4_{i,K}\,|\,  i,j \in T \text{ and } K,L \subseteq T\}\cup
\{\,R^5_{i,K}\,|\, i\in K \subseteq T\}\\
\cup \, \{\,R^6_{i,j,K}\,|\, j\in K \subseteq T\text{ and } i\notin K \}
\end{align*}
where $T=\{1,\ldots,r\}$ and for $i,j\in T$ and $K,L\subseteq T$ with
\\
$R^1_{i}$: \hspace{10pt}
\begin{tikzpicture}[scale=0.9,baseline=-0.5mm]
    \draw [thick](0,0) node[below left] {$t_i$} -- (0,0.3);
    \draw [thick](0,0) node[above left] {$t_i$}-- (0,-0.4);
    \draw[fill] (0,0.3) circle [radius=0.07];
    \draw[fill] (0,-0.4) circle [radius=0.07];
\end{tikzpicture}$\hspace{5pt}=0$\hspace{10pt},
\\
\\
$R^2_{i,j}$: \hspace{10pt}
\begin{tikzpicture}[scale=0.9,baseline=-0.5mm]
    \draw [thick](0,0) node[below left] {$t_j$} -- (0,0.3);
    \draw [thick](0,0) node[above left] {$t_i$}-- (0,-0.4);
    \draw[fill] (0,0.3) circle [radius=0.07];
    \draw[fill] (0,-0.4) circle [radius=0.07];
\end{tikzpicture}$ \hspace{5pt}= $
\begin{tikzpicture}[scale=0.9,baseline=-0.5mm]
    \draw [thick](0,0) node[below left] {$t_i$} -- (0,0.3);
    \draw [thick](0,0) node[above left] {$t_j$}-- (0,-0.4);
    \draw[fill] (0,0.3) circle [radius=0.07];
    \draw[fill] (0,-0.4) circle [radius=0.07];
\end{tikzpicture}\hspace{10pt},
\\
\\
\noindent
$R^3_{K,L}:$
$
\begin{tikzpicture}[scale=0.8,baseline=-0.5mm]
    \draw [thick](0,-0.4) node[below left] {$\mu_L$} -- (-0.6,0.3) node[ left] {$\mu_K$};
    \draw[fill] (-0.6,0.3) circle [radius=0.07];
    \draw[fill] (0,-0.4) circle [radius=0.07];
\end{tikzpicture}
= \left\{
\begin{array}{ll}
    \, \, \,
    \begin{tikzpicture}[scale=0.8,baseline=-0.5mm]
        \draw [thick](0,-0.4) node[below right] {$\mu_K$} -- (0.6,0.3) node[ right] {$\mu_L$};
        \draw[fill] (0.6,0.3) circle [radius=0.07];
        \draw[fill] (0,-0.4) circle [radius=0.07];
    \end{tikzpicture} & \text{ if  $L=\emptyset$ or $K\cap L\neq \emptyset$}\\
    & \\
    \begin{tikzpicture}[scale=0.8,baseline=-0.5mm]
        \draw [thick](0,-0.4) node[below left] {$\mu_0$} -- (-0.6,0.3) node[ left] {$\mu_{K\cup L}$};
        \draw[fill] (-0.6,0.3) circle [radius=0.07];
        \draw[fill] (0,-0.4) circle [radius=0.07];
    \end{tikzpicture} +
    \begin{tikzpicture}[scale=0.8,baseline=-0.5mm]
        \draw [thick](0,-0.4) node[below right] {$\mu_K$} -- (0.6,0.3) node[ right] {$\mu_L$};
        \draw[fill] (0.6,0.3) circle [radius=0.07];
        \draw[fill] (0,-0.4) circle [radius=0.07];
    \end{tikzpicture} +
    \begin{tikzpicture}[scale=0.8,baseline=-0.5mm]
        \draw [thick](0,-0.4) node[below right] {${\mu}_{K\cup L}$} -- (0.6,0.3) node[ right] {$\mu_L$};
        \draw[fill] (0.6,0.3) circle [radius=0.07];
        \draw[fill] (0,-0.4) circle [radius=0.07];
    \end{tikzpicture}  & \text{if $L\neq \emptyset$ and $K\cap L= \emptyset$} \hspace{1pt},\\
\end{array}
\right.
$
\\
\\
\newline
$R^4_{i,K}:$  \hspace{10pt}
$
\begin{tikzpicture}[scale=0.9,baseline=-0.5mm]
    \draw [thick](0,-0.4) node[below left] {$t_i$} -- (0,0.3) node[ left] {$\mu_K$};
    \draw[fill] (0,0.3) circle [radius=0.07];
    \draw[fill] (0,-0.4) circle [radius=0.07];
\end{tikzpicture}
\hspace{5pt}= \left\{
\begin{array}{ll}
    \, \, \,
    \begin{tikzpicture}[scale=0.9,baseline=-0.5mm]
        \draw [thick](0,-0.4) node[below right] {$\mu_K$} -- (0.6,0.3) node[ right] {$t_i$};
        \draw[fill] (0.6,0.3) circle [radius=0.07];
        \draw[fill] (0,-0.4) circle [radius=0.07];
    \end{tikzpicture} & \text{ if  $i \in K$}\\
    & \\
    \begin{tikzpicture}[scale=0.9,baseline=-0.5mm]
        \draw [thick](0,-0.4) node[below left] {$\mu_K$} -- (-0.6,0.3) node[ left] {$t_i$};
        \draw[fill] (-0.6,0.3) circle [radius=0.07];
        \draw[fill] (0,-0.4) circle [radius=0.07];
    \end{tikzpicture} +
    \begin{tikzpicture}[scale=0.9,baseline=-0.5mm]
        \draw [thick](0,-0.4) node[below right] {$\mu_K$} -- (0.6,0.3) node[ right] {$t_i$};
        \draw[fill] (0.6,0.3) circle [radius=0.07];
        \draw[fill] (0,-0.4) circle [radius=0.07];
    \end{tikzpicture} +
    \begin{tikzpicture}[scale=0.9,baseline=-0.5mm]
        \draw [thick](0,-0.4) node[below right] {${\mu_0}_{K\cup \{i\}}$} -- (0.6,0.3) node[ right] {$t_i$};
        \draw[fill] (0.6,0.3) circle [radius=0.07];
        \draw[fill] (0,-0.4) circle [radius=0.07];
    \end{tikzpicture}  & \text{if $i \notin K$ } \hspace{10pt},\\
\end{array}
\right.
$
\\
$R^5_{i,K}:$
\begin{tikzpicture}[scale=0.9,baseline=-0.5mm]
    \draw [thick](0,-0.4) node[below left] {$\mu_K$} -- (-0.6,0.3) node[ left] {$t_i$};
    \draw[fill] (-0.6,0.3) circle [radius=0.07];
    \draw[fill] (0,-0.4) circle [radius=0.07];
\end{tikzpicture}\hspace{5pt}$=0$ \quad if $i \in K$ \hspace{10pt},
\\
$R^6_{i,j,K}:$
\begin{tikzpicture}[scale=0.9,baseline=-0.5mm]
    \draw [thick](0,-0.4) node[below left] {$\mu_K$} -- (-0.6,0.3) node[ left] {$t_i$};
    \draw[fill] (-0.6,0.3) circle [radius=0.07];
    \draw[fill] (0,-0.4) circle [radius=0.07];
\end{tikzpicture}\hspace{5pt}$=$
\begin{tikzpicture}[scale=0.9,baseline=-0.5mm]
    \draw [thick](0,-0.4) node[below right] {$\mu_{\{K\backslash \{j\} \}\cup\{i\}}$} -- (-0.6,0.3) node[ left] {$t_j$};
    \draw[fill] (-0.6,0.3) circle [radius=0.07];
    \draw[fill] (0,-0.4) circle [radius=0.07];
\end{tikzpicture} \quad if $i \notin K$, $j \in K$.
\\
We define a quadratic operad and a quadratic cooperad as follows:
$$\tilde{\mathscr{W}}:=\mathscr{P}(M,R)\text{\ \ \ \ and \ \ \ \ }\tilde{\mathscr{S}}:=\mathscr{C}(sM,s^2R).$$

\subsection{The PBW basis of $\tilde{\mathscr{W}}$}\label{PBWbasisofW}

In order to define a $\mathrm{PBW}$ basis for the operad $\tilde{\mathscr{W}}$, we consider the graded path lexicographic order given in Section \ref{PBWbasis}.
It is straight forward to check that the quadratic part of the basis of $\tilde{\mathscr{W}}$ is given by the following set of trees:
\begin{align*}\left\{ \,\,
\begin{tikzpicture}[scale=0.8,baseline=-0.5mm]
    \draw [thick](0,-0.4) node[below right] {$\mu_K$} -- (0.6,0.3) node[ right] {$t_j$};
    \draw[fill] (0.6,0.3) circle [radius=0.07];
    \draw[fill] (0,-0.4) circle [radius=0.07];
\end{tikzpicture}\,\, \left| \,\,\begin{array}{c}
                    j\in T  \\
                    \text{and}\\
                    K\subseteq T
                  \end{array}\right.\,\,
\right\}
\bigcup
\left\{ \,\,
\begin{tikzpicture}[scale=0.8,baseline=-0.5mm]
    \draw [thick](0,-0.4) node[below right] {$t_j$} -- (0,0.3) node[ right] {$\mu_K$};
    \draw[fill] (0,0.3) circle [radius=0.07];
    \draw[fill] (0,-0.4) circle [radius=0.07];
\end{tikzpicture}\,\,\left| \,\, \begin{array}{c}
                    j\notin K  \\
                    \text{and}\\
                    K\subseteq T
                  \end{array}\right.\,\,
\right\}
\bigcup
\left\{ \,\,
\begin{tikzpicture}[scale=0.8,baseline=-0.5mm]
    \draw [thick](0,-0.4) node[below right] {$\mu_K$} -- (0.6,0.3) node[ right] {$\mu_L$};
    \draw[fill] (0.6,0.3) circle [radius=0.07];
    \draw[fill] (0,-0.4) circle [radius=0.07];
\end{tikzpicture}\,\,\left| \,\,\begin{array}{c}
                                 \\
                        K, L\subseteq T\\
                        \mbox{}
                       \end{array}\right.\,\,
\right\}.
\end{align*}
Note that every composition of the above operations  can be rewritten by a unique basis element.


\subsection{Multiplicative structure on minimal Hirsch-Brown models}\label{MultiplicativeStr}

Let $G$, $X$, and $C$ be as in Section \ref{minmodelsection}. Let $\Delta: C\rightarrow C\otimes C$ denote the Alexander-Whitney diagonal map. For $g$ in $G$, let $g:C\rightarrow C$ denote the multiplication by $g$ from the left. The map $\Delta$ is coassociative as in \cite{MacLane} and by naturality we have
$$\Delta\circ g= (g\otimes g)\circ \Delta$$
for any $g$ in $G$. Hence for $g$ in $G$, if $t=1+g$ then we have

\begin{equation}\label{equalityDeltaandT}
\begin{split}
\Delta\circ t & =\Delta\circ (1+g)=(\Delta\circ 1)+(\Delta\circ g)=(1\otimes 1)\circ \Delta +  (g\otimes g)\circ \Delta\\
& = ((1\otimes 1) +  (g\otimes g))\circ \Delta = ((t\otimes 1)+(1\otimes t)+(t\otimes t))\circ \Delta .
\end{split}
\end{equation}
Hence, we have a operad morphism defined as follows

\begin{equation} \label{eq1delta}
\begin{split}
\tilde{\mathscr{W}} & \longrightarrow {\mathop{\coEnd}}_C  \\
  t_i&\rightarrow 1+g_i  \\
  \mu_I &\rightarrow \left(\prod_{i\in I}(1+g_i)\ , \ 1\right) \circ \Delta
\end{split}
\end{equation}
for $i$ in $T$ and $I\subseteq T$. By abuse of notation, we will denote the image of $t_i$ and $\mu_I$ under this operad morphism by $t_i$ and $\mu_I$. Hence  $t_i$ and $\mu_I$ will be considered as operations in ${\mathop{\coEnd}}_C$.

In the following lemma, we assume $r=1$, and hence $T=\{1\}$. In this case instead of $g_1, t_1, \mu_{\emptyset}, \mu_T$ we write $g, t, \mu_0,\mu_1$ respectively.
\begin{lemma} \label{Lemmarelationr1}
Assume that $r=1$. Then the operations $t, \mu_0$ and $\mu_1$ in ${\mathop{\coEnd}}_C$ satisfy the relations $R_1, \ldots, R_8$ in Section \ref{ssBVr1}.
\end{lemma}
\begin{proof} We need to prove the following equations:
\begin{align*}
\begin{array}{l}
R_1: \; (t;t)=0,\\
R_2: \ (\mu_1;t,1)=0,\\
R_3:  \; (\mu_0;\mu_0,1)=(\mu_0;1,\mu_0),  \\
R_4: \ (\mu_1;\mu_1,1)=(\mu_1;1,\mu_1),\\
R_5: \; (t; \mu_0)=(\mu_0;t,1)+(\mu_0;1,t))+(\mu_1;1,t), \\
R_6: \; (\mu_0;\mu_1,1)=(\mu_1;1,\mu_0) ,\\
R_7: \ (t;\mu_1)=(\mu_1;1,t), \\
R_8: \; (\mu_1;\mu_0,1)=(\mu_0;\mu_1,1)+(\mu_0;1,\mu_1)+(\mu_1;1,\mu_1).
\end{array}
\end{align*}First notice we have $\mu_1=((1+g),1)\circ \Delta=(\mu_0;t,1)$ in ${\mathop{\coEnd}}_C$. We will call this equation $R_0$.

The first equation $(t;t)=0$ holds since $t=1+g, g^2=1$ and $(1+g)^2=1+2g+g^2=0$ in a characteristic $2$ field. The second equation $(\mu_1;t,1)=((\mu_0;t,1);t,1)=(\mu_0;t^2,1)=0$ follows from $R_0$ and $R_1$. By associativity of the operation $\mu_0$, we have the equation $R_3$.
The equation $R_5$ is given by the Equation \ref{equalityDeltaandT}.

The equation $R_4$ follows from $R_0$, $R_5$, $R_1$ and $R_3$. More precisely,
 one can obtain $(\mu_1;\mu_1,1)=(\mu_0;(t;\mu_1),1)=(\mu_0;(t;(\mu_0;t,1)),1)$ by $R_0$. Then
\begin{align*}
(\mu_1;\mu_1,1)& =(\mu_0;(\mu_0;t^2,1),1)+(\mu_0;(\mu_0;t,t),1))+(\mu_0;(\mu_0;t^2,t),1)\text{ by $R_5$}, \\
                         & =(\mu_0;(\mu_0;t,t),1)) \text{ by  $R_1$},\\
                         & =((\mu_0;\mu_0,1);t,t,1),\\
                         &= ((\mu_0;1,\mu_0);t,t,1) \text{ by  $R_3$}, \\
                         &=(\mu_0;t,(\mu_0;t,1))=(\mu_1;1,\mu_1).
\end{align*}

The equation $R_6$ can be seen as
\begin{align*}
(\mu_0;\mu_1,1)=(\mu_0;(\mu_0;t,1),1)&=((\mu_0;\mu_0,1);t,1,1)\text { by $R_0$;}\\
&=((\mu_0;1,\mu_0);t,1,1) \text{ by $R_3$},\\
&=(\mu_0;t,(\mu_0;1,1)),\\
&=((\mu_0;t,1);1,\mu_0)=(\mu_1;1,\mu_0).
\end{align*}
The equation $R_7$ follows from $R_0$, $R_5$ and $R_1$:
\begin{align*}
(t;\mu_1)=(t;(\mu_0;t,1))=(\mu_0;t^2,1)+(\mu_0;t,t)+(\mu_0;t^2,t)=(\mu_0;t,t)=(\mu_1;1,t).
\end{align*}
The last equation $R_8$ follows from $R_0$, $R_5$, $R_7$, $R_3$ and $R_4$. More precisely,
\begin{align*}
(\mu_1;\mu_0,1)&=((\mu_0;t,1);\mu_0,1)\text { by }R_0,\\
&=(\mu_0;(t;\mu_0),1),\\
&=(\mu_0;(\mu_0;t,1),1)+(\mu_0;(\mu_0;1,t),1)+(\mu_0;(\mu_1;1,t),1)\text { by }R_5,\\
&=(\mu_0;(\mu_0;t,1),1)+((\mu_0;\mu_0,1);1,t,1)+(\mu_0;(t;(\mu_0;t,1)),1)\text { by }R_7,\\
&=(\mu_0;(\mu_0;t,1),1)+((\mu_0;1,\mu_0);1,t,1)+(\mu_0;(t;(\mu_0;t,1)),1)\text { by }R_3,\\
&=(\mu_0;\mu_1,1)+(\mu_0;1,\mu_1)+(\mu_1;\mu_1,1) \text { by }R_0,\\
&=(\mu_0;\mu_1,1)+(\mu_0;1,\mu_1)+(\mu_1;1, \mu_1) \text { by }R_4.
\end{align*}

\end{proof}
\begin{lemma}\label {Lemmarelationr2}
The operations $t_1, t_2,\ldots, t_r$ and $\mu_L$ for $L\subseteq T=\{1,\ldots,r\}$ in ${\mathop{\coEnd}}_C$ satisfy the equations $R^1_i, R^2_{i,j}, R^3_{K,L}, R^4_{i,K}, R^5_{i,K}$ and $R^6_{i,j,K}$ listed in Section \ref{ssBV}.
\end{lemma}
\begin{proof}
In the operad ${\mathop{\coEnd}}_C$, we have $\displaystyle \mu_L=\Big(\mu_0;\prod_{i\in L} t_i,1\Big)$. Hence Lemma \ref{Lemmarelationr1} can be repeatedly applied for $t_i$ at a time to prove this lemma.
\end{proof}

Note that by Equation \ref{eq1delta}, we can consider $C$ as a $\dg$-$\tilde{\mathscr{W}}$-coalgebra and hence $C^*$ as a $\dg$-$\tilde{\mathscr{W}}$-algebra. We will call the $\tilde{\mathscr{S}}$-coalgebra $\barc_{\iota} H(C^*)$ the minimal
Hirsch-Brown model as at the end of Section \ref{minmodelsection}. Now, note that the inclusion ${\As}\rightarrowtail \tilde{\mathscr{W}}$,
induces
$\As^{{\mbox{!`} }}\rightarrowtail \tilde{\mathscr{S}}$, and so it induces ${\tilde{\mathscr{S}}}^*\twoheadrightarrow  (\As^{{\mbox{!`} }})^*=\As $. We also have surjective morphism $\tilde{\mathscr{W}}\rightarrowtail \As $ obtained by sending $t_i$ to $0$ for $i$ in $T$ and $\mu_L$ to $0$ for $\emptyset\neq L\subseteq T$. This induces $\tilde{\mathscr{S}}\twoheadrightarrow \As^{{\mbox{!`} }}$, and so it induces $\As= (\As^{{\mbox{!`} }})^*\rightarrowtail{\tilde{\mathscr{S}}}^* $. Notice that the composition
$$\As\rightarrowtail{\tilde{\mathscr{S}}}^*\twoheadrightarrow  \As $$
is the identity morphism on $\As$. So this means we have a multiplicative structure on duals of these minimal Hirsh-Brown models. Unfortunately, this multiplicative structure does not have all the properties of the multiplicative structure used by Puppe, therefore we cannot repeat the proof of \cite[Lemma~2.1.a]{Puppe} to obtain an equivalent result. We hope that results stronger than \cite[Lemma~2.1.b]{Puppe} can be proved to tighten the bounds mentioned in Section \ref{intro}. The reason for this hope is because we also know that the composition
$$\mathscr{S}^*\rightarrowtail{\tilde{\mathscr{S}}}^*\twoheadrightarrow \mathscr{S}^*  $$
is the identity morphism on  $\mathscr{S}^*$, where the first morphism is induced by sending $\mu_L$ to $0$ for $L\subseteq T$ and the second morphism comes from the inclusion $\As\rightarrowtail \tilde{\mathscr{W}}$. Hence we have an $S$-module structure, which is enough to prove \cite[Lemma~2.1.b]{Puppe}.

\begin{proof}[Proof of Theorem \ref{mainthm}] Take $\mathscr{P}=\tilde{\mathscr{W}}$ where $\tilde{\mathscr{W}}$ is the operad constructed in Section \ref{ssBV}. Hence $\mathscr{P}$ is a Kozul operad by Theorem \ref{HofbeckThm66} and Section \ref{PBWbasisofW}. Notice that the Koszul dual operad of $\mathscr{P}$ is $\tilde{\mathscr{S}}^*$. Hence the other properties of $\mathscr{P}$ are proved in Section \ref{MultiplicativeStr}.
\end{proof}

\begin{proof}[Proof of Proposition \ref{mainprop}] Let $X$ be a finite-dimensional free $G$-simplicial set. Notice that the group $G$ acts freely on products of $r$ many equidimensional spheres where $g_i$ acts on the $i$th sphere by the antipodal action. Hence $G$ acts freely on $\mathbb{S}^0 \times \ldots \times \mathbb{S}^0 $ and $\mathbb{S}^{m} \times \ldots \times \mathbb{S}^{m}$, where $m=\dim (X)+1$. Let $x_0$ be a point in $X$. First, we can define an equivariant map from $\mathbb{S}^0 \times \ldots \times \mathbb{S}^0 $ to $X$ by sending the south poles $s:=(-1,\ldots, -1)$ to $x_0$ and extending the map equivariantly. Second, we can construct a map from $X$ to $\mathbb{S}^{m} \times \ldots \times \mathbb{S}^{m}$ by sending $x_0$ to $i_m(s)$ then extending the map by using equivariant obstruction theory; see \cite[ Chapter~II]{BredonEquivariantCohomologyTheories}. Since the associated minimal models of products of equidimensional spheres are Koszul complexes, we obtain the result of the proposition by naturality of our constructions.
\end{proof}
\begin{proof}[Proof of Proposition \ref{mainprop2}]
Let $\mathscr{P}$ denote the operad in Theorem \ref{mainthm}, $m$ be a positive integer and $\gamma:{K_r}(m)\rightarrow {K_r}(0)$  be a $S^*$-coalgebra morphism which induces the same map from $\Homology (K_r(m))$ to $\Homology( K_r(0))$ as $i_m^*$ does. Assume $\gamma$  extends to a $\mathscr{P}^{\mbox{!`}}$-coalgebra morphism $\tilde{\gamma}:\tilde{K_r}(m)\longrightarrow \tilde{K_r}(0)$ whose restriction to $\mathscr{P}^{\mbox{!`}}\circ 1$ is induced by the identity on $\mathscr{P}^{\mbox{!`}}$.
Set $T=\{1,\ldots,r\}$. For $U\subseteq T$ and $n\in \{0,m\}$, let $z^{(n)}_U$ denote $\displaystyle\prod_{i\in U}z_i^{(n)}$ and $x_U$ denote $\displaystyle\prod_{i\in U}x_i^{m}$. Since $\gamma $ induces the same map from $\Homology (K_r(m))$ to $\Homology( K_r(0))$ as $i_m^*$ does,
 $\gamma ^*$ sends $[1\otimes z^{(0)}_{T}]$ to $[x_T\otimes z^{(m)}_{T}]$. Since $\gamma$ extends to $\mathscr{P}^{\mbox{!`}}$-coalgebra map whose restriction to $\mathscr{P}^{\mbox{!`}}\circ 1$ is induced by the identity on $\mathscr{P}^{\mbox{!`}}$, we can say that
$\gamma ^*$ sends $x_U\otimes z^{(0)}_{T-U}$ to $x_T\otimes z^{(m)}_{T-U} + e$  where $e$ contains only terms in the form $f\otimes z_L$ such that
$T-U$ is a proper subset of $L$ and $L$ is a subset of $T$. Hence $F\otimes \gamma^*$ sends
$z_{\emptyset}, z_{\{1\}},\ldots ,z_{\{r\}}, z_{\{1,2\}},\ldots ,z_{\{1,r\}}$ to linearly independent vectors in $F\otimes _S ({K_r}(m))^*$. Then the linear map $F\otimes _S \gamma^*$ has rank at least $2r$, where $F$ denotes the field of fractions of the ring $S$.
\end{proof}
\noindent \textbf{Acknowledgments.}
We would like to thank Marc Stephan for helpful discussions.

\end{document}